\newcommand{\vertex}[3]{\node [vertex] (#1) at (#2, #3 * 1.7) {};}
\newcommand{\edge}[2]{\draw (#1) -- (#2);}
\newcommand{\arc}[2]{{\draw[-latex] (#1) edge (#2);}}
\newcommand{\Sing}{\mathrm{Sing}}
\newcommand{\rk}{\mathrm{rk}}
\newcommand{\id}{\mathrm{id}}
\newcommand{\Ima}{\mathrm{Im}}
\newcommand{\cycl}{\mathrm{cycl}}
\newcommand{\fix}{\mathrm{fix}}
\newcommand{\Tour}{\mathrm{Tour}}
\newcommand{\diam}{\mathrm{diam}}
\newcommand{\Acy}{\mathrm{Acyclic}}
\newcommand{\genset}[1]{\langle#1\rangle}
\theoremstyle{plain}
\newtheorem{theorem}{Theorem}[section]
\newtheorem{conjecture}[theorem]{Conjecture}
\newtheorem{lemma}[theorem]{Lemma}
\newtheorem*{claim*}{Claim}
\newtheorem{claim}[theorem]{Claim}
\theoremstyle{definition}
\newtheorem{definition}{Definition}
\newtheorem{problem}{Problem}
\newtheorem{remark}{Remark}
\begin{document}

\title{Lengths of words in transformation semigroups generated by digraphs}
\author[1]{P.J. Cameron}
\author[2]{A. Castillo-Ramirez\footnote{Corresponding author: \texttt{alonso.castillo-ramirez@durham.ac.uk}}}
\author[2]{M. Gadouleau}
\author[1]{J.D. Mitchell}

\affil[1]{School of Mathematics and Statistics, University of St Andrews, St Andrews, Fife KY16 9SS, U.K.}
\affil[2]{School of Engineering and Computing Sciences, Durham University, South Road, Durham DH1 3LE, U.K.}

\maketitle

\begin{abstract}
Given a simple digraph $D$ on $n$ vertices (with $n\ge2$), there is a natural construction of a semigroup $\langle D\rangle$ associated with $D$. For any
edge $(a,b)$ of $D$, let $a\to b$ be the idempotent of defect $1$ mapping $a$ to $b$ and fixing all vertices other than $a$; then define $\langle D\rangle$ to be the semigroup $\langle a\to b:(a,b)\in E(D)\rangle$. For $\alpha \in \genset{D}$, let $\ell(D,\alpha)$ be the minimal length of a word in $E(D)$ expressing $\alpha$. When $D=K_n$ is the complete undirected graph, Howie and Iwahori, independently, obtained a formula to calculate $\ell(K_n,\alpha)$, for any $\alpha \in \genset{K_n} = \Sing_n$; however, no analogous nontrivial results are known when $D \neq K_n$. In this paper, we characterise all simple digraphs $D$ such that either $\ell(D,\alpha)$ is equal to Howie-Iwahori's formula for all $\alpha \in \genset{D}$, or $\ell(D,\alpha) = n - \fix(\alpha)$ for all $\alpha \in \genset{D}$, or $\ell(D,\alpha) = n - \rk(\alpha)$ for all $\alpha \in \genset{D}$. When $D$ is an acyclic digraph and $\alpha \in \genset{D}$, we find a tight upper bound for $\ell(D,\alpha)$. Finally, we study the case when $D$ is a strong tournament (which corresponds to a smallest generating set of idempotents of defect $1$ of $\Sing_n$), and we propose some conjectures. 
\end{abstract}

\section{Introduction}

For any $n \in \mathbb{N}$, $n \geq 2$, let $\Sing_n$  be the semigroup of all singular (i.e. non-invertible) transformations on $[n]:=\left\{1,...,n\right\} $. It is well-known (see \cite{H66}) that $\Sing_n$ is generated by its idempotents of defect $1$ (i.e. the transformations $\alpha \in \Sing_n$ such that $\alpha^2 = \alpha$ and $\rk(\alpha) := \vert \Ima(\alpha) \vert = n-1$). There are exactly $n(n-1)$ such idempotents, and each one of them may be written as $(a \to b)$, for $a,b \in [n]$, $a \ne b$, where, for any $v \in [n]$,
\[ (v) (a \to b) := \begin{cases}
		b &\text{if } v = a,\\
		v &\text{otherwise}.
	\end{cases} \]
Motivated by this notation, we  refer to these idempotents as \emph{arcs}. 

In this paper, we explore the natural connections between simple digraphs on $[n]$ and subsemigroups of $\Sing_n$; explicitly, for any simple digraph $D$ with vertex set $V(D)=[n]$ and edge set $E(D)$, we associate the semigroup
\[ \langle D \rangle := \left\langle (a \to b) \in \Sing_n : (a,b) \in E(D) \right\rangle. \]
We say that a subsemigroup $S$ of $\Sing_n$ is \emph{arc-generated} by a simple digraph $D$ if $S=\langle D \rangle$.

For the rest of the paper, we use the term `digraph' to mean `simple digraph' (i.e. a digraph with no loops or multiple edges). A digraph $D$ is \emph{undirected} if its edge set is a symmetric relation on $V(D)$, and it is \emph{transitive} if its edge set is a transitive relation on $V(D)$. We shall always assume that $D$ is \emph{connected} (i.e. for every pair $u, v \in V(D)$ there is either a path from $u$ to $v$, or a path from $v$ to $u$) because otherwise $\langle D \rangle \cong \genset{D_1} \times \dots \times \genset{D_k}$, where $D_1, \dots, D_k$ are the connected components of $D$. We say that $D$ is \emph{strong} (or \emph{strongly connected}) if for every pair $u,v \in V(D)$, there is a directed path from $u$ to $v$. We say that $D$ is a \emph{tournament} if for every pair $u,v \in V(D)$ we have $(u,v) \in E(D)$ or $(v,u) \in E(D)$, but not both.      

Many famous examples of semigroups are arc-generated. Clearly, by the discussion of the first paragraph, $\Sing_n$ is arc-generated by the complete undirected graph $K_n$. In fact, for $n \geq 3$, $\Sing_n$ is arc-generated by $D$ if and only if $D$ contains a strong tournament (see \cite{H78}). The semigroup of order-preserving transformations $\text{O}_n := \{ \alpha \in \Sing_n : u \le v \Rightarrow u \alpha \le v \alpha \}$ is arc-generated by an undirected path $P_n$ on $[n]$, while the Catalan semigroup $\text{C}_n := \{ \alpha \in \Sing_n : v \le v \alpha, u \le v \Rightarrow u \alpha \le v \alpha \}$ is arc-generated by a directed path $\vec{P}_n$ on $[n]$ (see \cite[Corollary 4.11]{S96}). The semigroup of non-decreasing transformations $\text{OI}_n := \{ \alpha \in \Sing_n : v \le v \alpha \}$ is arc-generated by the transitive tournament $\vec{T}_n$ on $[n]$ (Figure \ref{fig:vecT5} illustrates $\vec{T}_5$).   

\begin{figure}[!h]
\begin{center}
\begin{tikzpicture}
	\node (1) at (0,0) {1};
	\node (2) at (2,0) {2};
	\node (3) at (4,0) {3};
	\node (4) at (6,0) {4};
	\node (5) at (8,0) {5};
	
	\draw[-latex] (1) -- (2); 
	\draw[-latex] (2) -- (3);
	\draw[-latex] (3) -- (4);
	\draw[-latex] (4) -- (5);
	
	\draw[-latex] (1) .. controls(2,1) .. (3);
	\draw[-latex] (1) .. controls(3,2) .. (4);
	\draw[-latex] (1) .. controls(4,3) .. (5);
	
	\draw[-latex] (2) .. controls(4,1) .. (4);
	\draw[-latex] (2) .. controls(5,2) .. (5);
	
	\draw[-latex] (3) .. controls(6,1) .. (5);
\end{tikzpicture}
\end{center}
\caption{$\vec{T}_5$} \label{fig:vecT5}
\end{figure}

Connections between subsemigroups of $\Sing_n$ and digraphs have been studied before (see \cite{S96,YY02,YY06,YY09}). The following definition, which we shall adopt in the following sections, appeared in \cite{YY09}:

\begin{definition}
For a digraph $D$, the \emph{closure} $\bar{D}$ of $D$ is the digraph with vertex set $V(\bar{D}) := V\left( D\right) $ and edge set $E(\bar{D}):=E\left( D\right) \cup \left\{ \left( a,b \right) :\left( b ,a \right) \in E\left( D\right) \text{ is in a directed cycle of }D\right\}$. 
\end{definition}

Say that $D$ is \emph{closed} if $D = \bar{D}$. Observe that $\langle D \rangle = \langle \bar{D} \rangle$ for any digraph $D$.

Recall that the \emph{orbits} of $\alpha \in \Sing_n$ are the connected components of the digraph on $[n]$ with edges $\{ (x, x\alpha) : x \in [n] \}$. In particular, an orbit $\Omega$ of $\alpha$ is called \emph{cyclic} if it is a cycle with at least two vertices. An element $x \in [n]$ is a \emph{fixed point} of $\alpha$ if $x\alpha=x$. Denote by $\cycl(\alpha)$ and $\fix(\alpha)$ the number of cyclic orbits and fixed points of $\alpha$, respectively. Denote by $\ker(\alpha)$ the partition of $[n]$ induced by the \emph{kernel} of $\alpha$ (i.e. the equivalence relation $\{ (x,y) \in [n]^2 : x\alpha = y \alpha \}$).  

We introduce some further notation. For any digraph $D$ and $v \in V(D)$, define the \emph{in-neighbourhood} and the \emph{out-neighbourhood} of $v$ by
\[ N^-(v) := \{ u \in V(D) : (u, v) \in  E(D)\} \text{ and } N^+(v) := \{u \in V(D) : (v,u) \in E(D) \}, \]
respectively. We extend these definitions to any subset  $C \subseteq V(D)$ by letting $N^\epsilon(C) := \bigcup_{c \in C} N^\epsilon(c)$, where $\epsilon \in \{+,- \}$. The \emph{in-degree} and \emph{out-degree} of $v$ are $\deg^-(v):=\vert N^-(v) \vert$ and $\deg^+(v):=\vert N^+(v) \vert$, respectively, while the \emph{degree} of $v$ is $\deg(v) := \vert N^-(v) \cup N^+(v) \vert$. For any two vertices $u,v \in V(D)$, the \emph{$D$-distance} from $u$ to $v$, denoted by $d_D(u,v)$, is the length of a shortest path from $u$ to $v$ in $D$, provided that such a path exists. The \emph{diameter} of $D$ is $\diam(D) := \max \{ d_D(u,v) : u,v \in V(D), \ d_D(u,v) \text{ is defined} \}$.

Let $D$ be any digraph on $[n]$. We are interested in the lengths of transformations of $\genset{D}$ viewed as words in the free monoid $D^* := \{ (a \to b ) : (a,b)\in E(D)\}^*$. Say that a word $\omega \in D^*$ \emph{expresses} (or \emph{evaluates to}) $\alpha \in \genset{D}$ if $\alpha = \omega \phi$, where $\phi : D^* \to \genset{D}$ is the evaluation semigroup morphism. For any $\alpha \in \langle D \rangle$, let $\ell(D,\alpha)$ be the minimum length of a word in $D^*$ expressing $\alpha$. For $r \in [n-1]$, denote
\begin{align*}
	\ell(D,r) &:= \max \left\{  \ell(D,\alpha) : \alpha \in \langle D \rangle, \rk(\alpha) = r \right\},\\
	\ell(D) &:= \max \left\{  \ell(D,\alpha) : \alpha \in \langle D \rangle \right\}.
\end{align*}

The main result in the literature in the study of $\ell(D,\alpha)$ was obtained by Howie and Iwahori, independently, when $D = K_n$.
\begin{theorem}[\cite{H80,I77}]  \label{Howie-Iwahori}
For any $\alpha \in \Sing_n$,
\[	\ell(K_n, \alpha) = n+ \cycl(\alpha) - \fix(\alpha). \]
Therefore, $\ell(K_n, r) = n +  \left\lfloor \frac{1}{2} (r-2) \right\rfloor$, for any $r \in [n-1]$, and $\ell(K_n)  = \ell(K_n, n-1) = \left\lfloor \frac{3}{2} (n-1) \right\rfloor$.
\end{theorem}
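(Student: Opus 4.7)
The goal is to prove the equality $\ell(K_n,\alpha) = n + \cycl(\alpha) - \fix(\alpha)$ by establishing matching upper and lower bounds, and then derive the corollaries on $\ell(K_n,r)$ and $\ell(K_n)$ by an elementary optimisation over $r$.

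For the upper bound, I would decompose the functional digraph of $\alpha$ (the digraph on $[n]$ with edges $x \to x\alpha$) into its weakly connected components, the orbits of $\alpha$, and realise $\alpha$ orbit-by-orbit, producing for each orbit $\Omega$ a subword that evaluates to $\alpha$ on $\Omega$ and to the identity elsewhere. Each orbit has a unique core which is either a fixed point or a cycle of length $\geq 2$, with trees rooted at core vertices. For an orbit whose core is a single fixed point one can express $\alpha|_\Omega$ using $|\Omega|-1$ arcs, applying $(v \to v\alpha)$ in root-to-leaf order so that each arc is "absorbed" into an already-processed parent; for an orbit whose core is a cycle of length $\geq 2$ with at least one attached tree vertex, the tree supplies a natural helper and $\alpha|_\Omega$ is realised in $|\Omega|$ arcs; for a pure cycle orbit of length $k \geq 2$ one needs $k+1$ arcs, an extra arc being spent to borrow a helper from another orbit (such an orbit exists because $\alpha \in \Sing_n$, so some orbit must be non-pure). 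Single fixed-point orbits cost $0$. A direct summation of these per-orbit contributions yields exactly $n + \cycl(\alpha) - \fix(\alpha)$.

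For the lower bound, I would define the potential $\Phi(\beta) := n + \cycl(\beta) - \fix(\beta)$, which vanishes on $\id$, and prove the step inequality $\Phi(\beta e) \leq \Phi(\beta) + 1$ for every transformation $\beta$ of $[n]$ and every arc $e = (a \to b)$. Iterating from $\id$ along any minimal arc-factorisation of $\alpha$ then gives $\Phi(\alpha) \leq \ell(K_n,\alpha)$. Right-multiplication by $e$ redirects every edge $x \to a$ in the functional digraph of $\beta$ into an edge $x \to b$, leaving all other edges unchanged. I expect the case analysis for the step inequality to be the main obstacle: one must track the positions of $a$ and $b$ (fixed point, interior cycle vertex, tree vertex; same orbit or not) and in each case verify that the net change $(\cycl(\beta e) - \cycl(\beta)) + (\fix(\beta) - \fix(\beta e))$ is at most $1$. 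Key observations will be that only edges into $a$ are moved, so only one fixed point (namely $a$ itself) can be lost, and new pure cycles in $\beta e$ must pass through $b$, limiting how many can appear.

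Finally, the closed form $\ell(K_n,r) = n + \lfloor (r-2)/2 \rfloor$ follows by maximising $\Phi(\alpha)$ over singular $\alpha$ with $\rk(\alpha) = r$. To maximise $\cycl(\alpha) - \fix(\alpha)$ under the rank constraint one avoids fixed points and uses as many length-$2$ cycles as possible; the extremal construction combines $\lfloor (r-2)/2 \rfloor$ disjoint transpositions with either a cycle-with-tree triple (for $r$ even) or a fixed point with one attached leaf (for $r$ odd) to account for the remaining rank and to enforce singularity. A matching upper bound on $\cycl - \fix$ comes from the facts that each pure cycle contributes at least two vertices to $\Ima(\alpha)$ and that singularity forces at least one non-pure orbit absorbing a further two units of rank with no compensating cycle. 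Since $\lfloor (r-2)/2 \rfloor$ is non-decreasing in $r$, the overall maximum over $r \in [n-1]$ is attained at $r = n-1$, yielding $\ell(K_n) = \lfloor 3(n-1)/2 \rfloor$.
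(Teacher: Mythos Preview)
The paper does not contain a proof of this theorem: it is quoted from Howie \cite{H80} and Iwahori \cite{I77} and used as a black box throughout. There is therefore nothing in the paper to compare your proposal against.

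Your outline is nonetheless essentially correct and follows the classical route. For the lower bound, the potential $\Phi(\beta)=n+\cycl(\beta)-\fix(\beta)$ does satisfy the step inequality $\Phi(\beta e)\le\Phi(\beta)+1$; the decisive observation, which you identify, is that after right-multiplying by $e=(a\to b)$ the vertex $a$ has in-degree~$0$ in the functional digraph of $\beta e$, so any orbit containing $a$ is automatically non-pure. In particular, if $a$ was fixed in~$\beta$ (the only way a fixed point can be lost), then $a$ becomes a leaf in the orbit of $b$ in $\beta e$, so no new pure cycle can be created there and the net change in $\Phi$ is at most~$+1$.

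Two small points deserve care. First, for a \emph{pure} cycle orbit the subword of length $|\Omega|+1$ cannot literally be the identity outside~$\Omega$: as a transformation it has rank $n-1$, so the borrowed helper $h$ is necessarily moved into the cycle. The count still works provided you process a non-pure orbit containing a source $h$ \emph{before} the pure cycles, so that position~$h$ is already vacated; the pure-cycle subwords then shuttle through position~$h$ and leave it vacant again, and hence act as the identity on all \emph{vertices} outside~$\Omega$ in the context of the full product. Second, your extremal construction for odd~$r$ does not add up: with $\lfloor(r-2)/2\rfloor=(r-3)/2$ transpositions plus a fixed point with one leaf you reach rank $r-2$ and $\cycl-\fix=(r-5)/2$. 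A correct construction is $(r-1)/2$ transpositions together with one fixed point carrying all $n-r$ remaining vertices as leaves (rank $r$, $\cycl-\fix=(r-1)/2-1=(r-3)/2$), or alternatively $(r-3)/2$ transpositions together with a $3$-cycle carrying the $n-r$ leaves.
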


In the following sections, we study $\ell(D, \alpha)$, $\ell(D,r)$, and $\ell(D)$, for various classes of digraphs. In Section \ref{sec:short}, we characterise all digraphs $D$ on $[n]$ such that either $\ell(D, \alpha) = n + \cycl(\alpha) - \fix(\alpha)$ for all $\alpha \in \genset{D}$, or $\ell(D, \alpha) =n - \fix(\alpha)$ for all $\alpha \in \genset{D}$, or $\ell(D, \alpha) =n - \rk(\alpha)$ for all $\alpha \in \genset{D}$. In Section \ref{sec:long}, we are interested in the maximal possible length of a transformation in $\genset{D}$ of rank $r$ among all digraphs $D$ on $[n]$ of certain class $\mathcal{C}$; we denote this number by $\ell_{\max}^{\mathcal{C}}(n,r)$. In particular, when $\mathcal{C}$ is the class of acyclic digraphs, we find an explicit formula for $\ell_{\max}^{\mathcal{C}}(n,r)$. When $\mathcal{C}$ is the class of strong tournaments, we find upper and lower bounds for $\ell_{\max}^{\mathcal{C}}(n,r)$ (and for the analogously defined $\ell_{\min}^{\mathcal{C}}(n,r)$), and we provide some conjectures.


\section{Arc-generated semigroups with short words} \label{sec:short}

Let $D$ be a digraph on $[n]$, $n \geq 3$, and $\alpha \in \langle D \rangle$. Theorem \ref{Howie-Iwahori} implies the following three bounds:
\begin{equation}\label{eq:n-rk}
	\ell(D, \alpha) \ge n + \cycl(\alpha) - \fix(\alpha) \ge n - \fix(\alpha) \ge n- \rk(\alpha).
\end{equation}

The lowest bound is always achieved for constant transformations.

\begin{lemma}\label{le:constant}
For any digraph $D$ on $[n]$, if $\alpha \in \langle D \rangle$ has rank $1$, then $\ell(D, \alpha) = n - 1$.
\end{lemma}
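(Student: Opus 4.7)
The lower bound $\ell(D, \alpha) \ge n-1$ is immediate from \eqref{eq:n-rk}, since $\rk(\alpha)=1$; the content of the lemma is in the matching upper bound. Write $\alpha$ as the constant map with image $\{b\}$ for some $b \in [n]$. The plan is to construct directly a word of length $n-1$ in $D^*$ evaluating to $\alpha$, using a spanning in-arborescence of $D$ rooted at $b$, processed from the leaves down to the root.

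First I would show that every vertex of $D$ admits a directed $D$-path to $b$. Fix any word $w = (a_1 \to c_1) \cdots (a_m \to c_m) \in D^*$ expressing $\alpha$, and for each $v \in [n]$ define the trajectory $x_0 := v$, $x_i := x_{i-1}(a_i \to c_i)$. Then $x_m = v\alpha = b$, and whenever $x_i \ne x_{i-1}$ we have $(x_{i-1}, x_i) = (a_i, c_i) \in E(D)$; hence $x_0, x_1, \ldots, x_m$ is a walk in $D$ from $v$ to $b$, and so contains a $D$-path. Consequently $D$ has a spanning in-arborescence $T$ rooted at $b$. Enumerate the non-root vertices of $T$ as $v_1, v_2, \ldots, v_{n-1}$ in a bottom-up order, meaning that each $v_i$ has no $T$-child in $\{v_i, v_{i+1}, \ldots, v_{n-1}\}$, so that leaves are processed before their parents. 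Writing $p(v)$ for the $T$-parent of $v$, each $(v_i, p(v_i)) \in E(D)$, so
\[ w' := (v_1 \to p(v_1))\,(v_2 \to p(v_2)) \cdots (v_{n-1} \to p(v_{n-1})) \]
is a word in $D^*$ of length exactly $n-1$.

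It remains to verify that $w' \phi = \alpha$. Fix $v \in [n]$ and let $v = u_0, u_1, \ldots, u_\ell = b$ be the $T$-path from $v$ to $b$. The bottom-up ordering guarantees that $(u_i \to u_{i+1})$ precedes $(u_{i+1} \to u_{i+2})$ in $w'$ and that no arc with source $u_{i+1}$ occurs between them. An induction on $i$ then shows that just before $(u_i \to u_{i+1})$ is applied the current image of $v$ equals $u_i$; after $(u_{\ell-1} \to b)$ is applied, $v$'s image is $b$, where it stays since $b$ never appears as a source in $w'$. The verification of this invariant is the only substantive step in the argument: the point is that $v$'s image must \emph{catch up} to $u_i$ exactly when $(u_i \to u_{i+1})$ fires and must not be \emph{pushed past it} before, which is precisely what the bottom-up ordering enforces.
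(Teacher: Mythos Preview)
Your proof is correct and follows essentially the same approach as the paper's. The paper organises the non-target vertices into distance layers $C_d = \{v : d_D(v,v_0)=d\}$, picks for each $v\in C_d$ a neighbour $v'\in C_{d-1}$, and applies the arcs layer by layer from $d=m$ down to $d=1$; this is precisely your spanning in-arborescence (a BFS tree) processed in a particular bottom-up order, so the two arguments coincide up to the choice of tree and ordering.
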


\begin{proof}
It is clear that $\ell(D, \alpha) \geq n-1$ because $\alpha$ has $n-1$ non-fixed points. Let $\Ima(\alpha) = \{v_0\} \subseteq [n]$. Note that, for any $v \in [n]$, there is a directed path in $D$ from $v$ to $v_0$ (as otherwise, $\alpha \not \in \langle D \rangle$). For any $d \ge 1$, let
\[ 	C_d := \{ v \in [n] : d_D(v,v_0) = d\}. \]
Clearly, $[n] \setminus \{ v_0 \}= \bigcup_{d=1}^{m} C_d$, where $m := \max_{v\in[n]}\{ d_{D}(v,v_0) \}$ and the union is disjoint. For any $v \in C_d$, let $v'$ be a vertex in $C_{d-1}$ such that $(v \to v') \in D$. For any distinct $v, u \in C_d$ and any choice of $v', u' \in C_{d-1}$, the arcs $(v \to v')$ and $(u \to u')$ commute; hence, we can decompose $\alpha$ as
\[ 	\alpha = \bigcirc_{d=m}^1 \bigcirc_{v \in C_d} (v \to v'), \] 
where the composition of arcs is done from $m$ down to $1$.
\end{proof}

Inspired by the previous lower bounds, in this section we characterise the all connected digraphs $D$ on $[n]$ satisfying the following conditions:
\begin{align}
\forall \alpha \in \langle D \rangle, \ \ell(D, \alpha) & = n + \cycl(\alpha) - \fix(\alpha); \label{bound1} \tag{\textbf{C1}} \\
\forall \alpha \in  \langle D \rangle, \ \ell(D, \alpha) & =  n - \fix(\alpha); \label{bound2} \tag{\textbf{C2}} \\
\forall \alpha \in  \langle D \rangle, \ \ell(D, \alpha) & = n- \rk(\alpha). \label{bound3} \tag{\textbf{C3}}
\end{align}


\subsection{Digraphs satisfying condition (\ref{bound1})}

Theorem \ref{Howie-Iwahori} says that $K_n$ satisfies (\ref{bound1}). In order to characterise all digraphs satisfying (\ref{bound1}), we introduce the following property on a digraph $D$:
\begin{description}
\item[($\star$)] If $d_D(v_0, v_2) = 2$ and $v_0,v_1,v_2$ is a directed path in $D$, then $N^+\left(\{v_1,v_2\}\right) \subseteq \{v_0,v_1,v_2\}$.
\end{description}

We shall study the strong components of digraphs satisfying property ($\star$). We state few observations that we use repeatedly in this section.

\begin{remark} \label{re:star}
Suppose that $D$ satisfies property ($\star$). If $v_0, v_1, v_2$ is a directed path in $D$ and $\deg^+(v_1) >2$, or $\deg^+(v_2) >2$, then $(v_0, v_2) \in E(D)$. Indeed, if $(v_0, v_2) \not \in E(D)$, then $d_D(v_0, v_2) = 2$, so, by property ($\star$), $N^+\left(\{v_1,v_2\}\right) \subseteq \{v_0,v_1,v_2\}$; this contradicts that $\deg^+(v_1) >2$, or $\deg^+(v_2) >2$.
\end{remark}

\begin{remark} \label{re:star2}
Suppose that $D$ satisfies property ($\star$). If $v_0, v_1, v_2$ is a directed path in $D$ and either $v_1$ or $v_2$ have an out-neighbour not in $\{ v_0, v_1, v_2 \}$, then $(v_0,v_1) \in E(D)$.   
\end{remark}

\begin{remark} \label{re:star3}
If $D$ satisfies property ($\star$), then $\diam(D) \leq 2$. Indeed, if $v_0, v_1, \dots, v_k$ is a directed path in $D$ with $d_D (v_0, v_k) = k \geq 3$, then $v_0, v_1, v_2$ is a directed path in $D$ and $v_2$ has an out-neighbour $v_3 \not \in \{ v_0, v_1, v_2 \}$; by Remark \ref{re:star2}, $(v_0,v_2)\in E(D)$, which contradicts that $d_D (v_0, v_k) = k$. 
\end{remark}

Note that digraphs satisfying property ($\star$) are a slight generalisation transitive digraphs.	

Let $D$ be a digraph and let $C_1$ and $C_2$ of be two strong components of $D$. We say that $C_1$ \emph{connects} to $C_2$ if $(v_1, v_2) \in E(D)$ for some $v_1 \in C_1$, $v_2 \in C_2$; similarly, we say that $C_1$ \emph{fully connects} to $C_2$ if $(v_1, v_2) \in E(D)$ for all $v_1 \in C_1$, $v_2 \in C_2$. The strong component $C_1$ is called \emph{terminal} if there is no strong component $C \neq C_1$ of $D$ such that $C_1$ connects to $C$.  

\begin{lemma}\label{le1:bound1}
Let $D$ be a closed digraph satisfying property ($\star$). Then, any strong component of $D$ is either an undirected path $P_3$ or complete. Furthermore, $P_3$ may only appear as a terminal strong component of $D$.
\end{lemma}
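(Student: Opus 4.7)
The plan is to analyse an arbitrary strong component $C$ of $D$ and use property ($\star$) together with closedness to pin down its structure. First, because $D$ is closed, every edge with both endpoints in $C$ lies in a directed cycle (we can always return inside a strong component), so its reverse is also an edge; hence the induced subdigraph on $C$ is symmetric, and one may think of $C$ as an undirected graph. The cases $|C| \leq 2$ are immediate, so I would focus on the case $|C| \geq 3$ and assume for contradiction that $C$ is not complete.

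Pick $a, b \in C$ with $(a,b) \notin E(D)$ (and hence, by the symmetry within $C$, also $(b,a) \notin E(D)$). By Remark \ref{re:star3}, $\diam(D) \leq 2$, so there is a length-$2$ directed path $a, c, b$ in $D$. A quick check shows $c \in C$: since $c$ reaches $b$, $b$ reaches $a$ (strongly connected), and $a$ reaches $c$, the vertex $c$ belongs to the strong component of $a$. Now $d_D(a,b)=2$, so property ($\star$) gives $N^+(\{c,b\}) \subseteq \{a,b,c\}$. Because $D$ is closed, the reversed path $b, c, a$ is also a directed path in $D$, and its endpoints are still non-adjacent, so applying ($\star$) once more yields $N^+(\{c,a\}) \subseteq \{a,b,c\}$. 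Combining both, I obtain the absorption property
\[ N^+(\{a,b,c\}) \subseteq \{a,b,c\}. \]

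With this in hand, both conclusions fall out at once. Since no edge leaves $\{a,b,c\}$ and $C$ is strongly connected, every vertex of $C$ reachable from $a$ must lie in $\{a,b,c\}$, forcing $C = \{a,b,c\}$. The only edges inside $C$ are then $(a,c),(c,a),(b,c),(c,b)$, so $C$ is an undirected $P_3$ with centre $c$. Moreover, the absorption property says that $C$ has no out-edges to vertices outside itself, which is precisely the statement that $C$ is terminal.

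The main subtlety, and the place where I would be most careful, is the justification that $c \in C$ and that property ($\star$) can legitimately be applied in both directions along the same non-edge; both points rely essentially on $D$ being closed. Everything else is a straightforward chain of deductions from ($\star$), strong connectivity, and the absorption conclusion above.
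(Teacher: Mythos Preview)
Your argument is correct and is genuinely different from the paper's. The paper first disposes of $|C|\le 3$ as ``clear'' and then, for $|C|\ge 4$, runs a degree dichotomy: if every vertex of $C$ has degree at most $2$ then $C$ is a path or a cycle, and cycles of length $4$ or $5$ are excluded by $(\star)$ and the diameter bound; if some vertex has degree at least $3$, Remark~\ref{re:star} forces its neighbourhood to be a clique, this propagates to all of $C$, and $\diam(D)\le 2$ then yields completeness. Terminality of a $P_3$ component is handled separately at the end by a one-line appeal to $(\star)$.

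You instead pick any non-edge $\{a,b\}$ in $C$, locate an intermediate vertex $c\in C$ via $\diam(D)\le 2$, and apply $(\star)$ to both $a,c,b$ and $b,c,a$ (the second available precisely because $D$ is closed) to obtain the absorbing triple $N^+(\{a,b,c\})\subseteq\{a,b,c\}$. Strong connectivity then traps $C$ inside this triple, identifying $C$ as $P_3$ and simultaneously giving terminality for free. Your route is shorter, avoids the case split on degrees, and unifies the $|C|=3$ non-complete case with the terminality conclusion in a single stroke; the paper's approach, on the other hand, makes the role of the degree-$\ge 3$ vertex and the ``neighbourhoods are cliques'' mechanism more visible, which is reused implicitly later in Lemma~\ref{le2:bound1}. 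One small point worth making explicit in your write-up: when you invoke $(\star)$ on $b,c,a$ you need $d_D(b,a)=2$, which follows because $(b,a)\notin E(D)$ and the path $b,c,a$ has length $2$; you state the non-adjacency but leave the distance computation implicit.
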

\begin{proof}
Let $C$ be a strong component of $D$. Since $D$ is closed, $C$ must be undirected. The lemma is clear if $\vert C \vert \leq 3$, so assume that $\vert C \vert \geq 4$. We have two cases:
\begin{description}
\item[Case 1:] Every vertex in $C$ has degree at most $2$. Then $C$ is a path or a cycle. Since $\vert C \vert \geq 4$ and $\diam(D) \leq 2$, then $C$ is a cycle of length $4$ or $5$; however, these cycles do no satisfy property ($\star$).

\item[Case 2:] There exists a vertex $  a \in C$ of degree $3$ or more. Any two neighbours of $a$ are adjacent: indeed, for any $u,v \in N(a)$, $u,a,v$ is a path and $\deg^+(a) > 2$, so $(u,v) \in E(D)$ by Remark \ref{re:star}. Hence, the neighbourhood of $a$ is complete and and every neighbour of $a$ has degree $3$ or more. Applying this rule recursively, we obtain that every vertex in $C$ has degree $3$ or more, and the neighbourhood of every vertex is complete. Therefore, $C$ is complete because $\diam(D) \leq 2$.
\end{description}

Finally, if $P_3$ is a strong component of $D$, there cannot be any edge coming out of it because of property ($\star$), so it must be a terminal component.
\end{proof}

\begin{lemma}\label{le2:bound1}
Let $D$ be a closed digraph satisfying property ($\star$). Let $C_1$ and $C_2$ be strong components of $D$, and suppose that $C_1$ connects to $C_2$.
\begin{description}
\item[(i)] If $C_2$ is nonterminal, then $C_1$ fully connects to $C_2$.
\item[(ii)] Let $|C_2|  = 1$. If either $|C_1| \neq 2$, or the vertex in $C_1$ that connects to $C_2$ has out-degree at least $3$, then $C_1$ fully connects to $C_2$.
\item[(iii)] Let $|C_2|  = 2$. If not all vertices in $C_1$ connect to the same vertex in $C_2$, then $C_1$ fully connects to $C_2$.
\item[(iv)] If $|C_2| \ge 3$, then $C_1$ fully connects to $C_2$.
\end{description}
\end{lemma}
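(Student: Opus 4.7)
First, observe that in all four parts $C_1$ connects to $C_2 \neq C_1$, so $C_1$ is a nonterminal strong component; Lemma \ref{le1:bound1} therefore forces $C_1$ to be complete. The workhorse throughout is Remark \ref{re:star2}: given a directed path $u_0, u_1, u_2$ in $D$, the chord $(u_0, u_2) \in E(D)$ follows as soon as one exhibits an out-neighbour of $u_1$ or $u_2$ lying outside $\{u_0, u_1, u_2\}$.

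For parts (i), (iii), and (iv) my plan is a common two-step argument. Starting from the given edge $v_1 \to v_2$ with $v_1 \in C_1$ and $v_2 \in C_2$, Step~A promotes this into edges from $v_1$ to every vertex of $C_2$; Step~B then uses completeness of $C_1$ to spread the conclusion to all of $C_1$. Specifically, in Step~B, for any other $u \in C_1$ and any target $v_2' \in C_2$, the edge $u \to v_1$ yields a path $u, v_1, v_2'$, and Step~A (together with $|C_2| \geq 2$) supplies an out-neighbour of $v_1$ in $C_2 \setminus \{v_2'\}$, which lies outside $\{u, v_1, v_2'\}$ because distinct strong components are vertex-disjoint; Remark \ref{re:star2} then gives $(u, v_2') \in E(D)$. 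Part (ii), where $|C_2|=1$, is handled directly, since Step~A is vacuous and Step~B has no intra-$C_2$ out-neighbour available.

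What differs between parts is the supply of an external out-neighbour used to invoke Remark \ref{re:star2} in Step~A (or in the direct argument of (ii)). In part (iv), if $|C_2| \geq 3$ with $C_2$ complete, then $C_2$ itself furnishes such a neighbour; if $C_2 = P_3$, three short applications of property $(\star)$ along $v_1$ together with the two undirected edges of $P_3$ force $v_1$ to connect to each of the three vertices. In part (iii), where $C_2 = \{a, b\}$ with $a \leftrightarrow b$, the mutual edges of $C_2$ serve as external neighbours, and one or two path arguments extract from the hypothesis a single vertex of $C_1$ sending edges to both $a$ and $b$, after which Step~B concludes. In part (i), the nonterminality of $C_2$ provides some $x \in C_2$ with $x \to c$ for $c \notin C_2$; the critical check is $c \notin C_1$, since otherwise a cycle through $C_1$ and $C_2$ would collapse them into one strong component, contradicting $C_1 \neq C_2$. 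In part (ii), the external neighbour is either a third vertex of $C_1$ (when $|C_1| \geq 3$) or the hypothesised third out-neighbour of $v$ (when $|C_1|=2$ and $\deg^+(v) \geq 3$).

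The main bookkeeping obstacle is the external-vertex check in part (i) together with the $P_3$-subcase of (iv): in both cases the verification reduces to confirming that the vertex delivered by property $(\star)$ or by nonterminality genuinely lies outside the three-element set appearing in Remark \ref{re:star2}, which always follows from the disjointness of distinct strong components and a short inventory of which of $\{u_0, u_1, u_2\}$ could conceivably coincide with the supplied out-neighbour.
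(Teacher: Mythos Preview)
Your proposal is correct and follows essentially the same approach as the paper: both arguments begin by invoking Lemma~\ref{le1:bound1} to see that $C_1$ is complete, and then repeatedly apply Remark~\ref{re:star2} to directed paths of length~$2$ to force the missing edges. Your Step~A/Step~B organisation is a somewhat cleaner packaging of what the paper does part by part---for instance, in part (iii) the paper directly shows that every $b\in C_1\setminus\{c_1\}$ connects to $c_2$ and every $d\in C_1\setminus\{c_1'\}$ connects to $c_2'$, whereas you first locate one vertex of $C_1$ hitting both and then spread via Step~B---but the underlying mechanism is identical. One small note: in part (i) you are actually more careful than the paper, which asserts without comment that the specific vertex $c_2$ has an out-edge leaving $C_1\cup C_2$; your formulation ``some $x\in C_2$ with $x\to c$'' is the correct starting point, and your case analysis handles it.
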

\begin{proof}
Recall that $C_1$ and $C_2$ are undirected because $D$ is closed. If $|C_1| = 1$ and $|C_2| = 1$, clearly $C_1$ fully connects to $C_2$. Henceforth, we assume $|C_1| \geq 2$ or $|C_2| \geq 2$. Let $c_1 \in C_1$ and $c_2 \in C_2$ be such that $(c_1,c_2) \in E(D)$. As $C_1$ is a nonterminal, Lemma \ref{le1:bound1} implies that $C_1$ is complete.
\begin{description}
\item[(i)] As $C_2$ is nonterminal, there exists $d \in D \setminus (C_1 \cup C_2)$ such that $(c_2,d) \in E(D)$. Suppose that $|C_1| \geq 2$. Then, for any $c'_1 \in C_1 \setminus \{ c_1 \}$, $c'_1, c_1, c_2$ is a directed path in $D$ with $d \in N^{+}( c_2 )$, so Remark \ref{re:star2} implies $(c'_1, c_2) \in E(D)$. Suppose now that $|C_2| \geq 2$. Then, for any $c'_2 \in C_2 \setminus \{ c_2\}$, $c_1, c_2, c'_2$ is a directed path in $D$ with $d \in N^{+}(c_2)$, so again $(c_1, c'_2) \in E(D)$. Therefore, $C_1$ fully connects to $C_2$.    

\item[(ii)] Suppose that $|C_1| \geq 2$. If $\vert C_1 \vert >2$, then $\deg^+(c_1) > 2$, because $C_1$ is complete. Thus, for each $c'_1 \in C_1 \setminus \{ c_1 \}$, $c'_1, c_1, c_2$ is a directed path in $D$ with $\deg^+(c_1) > 2$, so $(c'_1, c_2) \in E(D)$ by Remark \ref{re:star}. As $|C_2|  = 1$, this shows that $C_1$ fully connects to $C_2$. 

\item[(iii)] Let $C_2 = \{ c_2, c'_2 \}$ and let $c'_1 \in C_1 \setminus \{ c_1 \}$ be such that $(c'_1, c'_2) \in E(D)$. For any $b ,d\in C_1 $, $b \neq c_1$, $d \neq c'_1$, both $b, c_1, c_2$ and $d, c'_1, c'_2$ are directed paths in $D$ with $c'_2 \in N^+(c_2)$ and $c_2 \in N^+(c'_2)$; hence, $(b,c_2) , (d, c'_2) \in E(D)$ by Remark \ref{re:star2}.    

\item[(iv)] Suppose that $C_2 = P_3$. Say $C_2 = \{ c_2, c'_2, c''_2\}$ with either $d_{D}(c_2, c''_2)=2$ or $d_{D}(c'_2, c''_2)=2$. In any case, $c_1, c_2, c'_2$ is a directed path in $D$ with $c''_2 \in N^+ (\{c_2, c'_2 \})$, so $(c_1, c'_2) \in E(D)$ by Remark \ref{re:star2}; now, $c_1, c'_2, c''_2$ is a directed path in $D$ with $c_2 \in N^+ (\{c'_2, c''_2 \})$, so $(c_1, c''_2) \in E(D)$. Hence, $c_1$ is connected to all vertices of $C_2$. As $C_1$ is complete, a similar argument shows that every $c'_1 \in C_1 \setminus \{ c_1\}$ connects to every vertex in $C_2$.

Suppose now that $C_2 = K_m$ for $m \ge 3$. By a similar reasoning as the previous paragraph, we show that $(c_1, v) \in E(D)$ for all $v \in C_2$. Now, for any $c'_1 \in C_1 \setminus \{ c_1 \}$, $v \in C_2$, $c'_1, c_1, v$ is a directed path in $D$ so $(c'_1, v) \in E(D)$ by Remark \ref{re:star2}.
\end{description}
\end{proof}

\begin{lemma}\label{le3:bound1}
Let $D$ be a closed digraph satisfying property ($\star$). Let $C_i$, $i=1,2,3$, be strong components of $D$, and suppose that $C_1$ connects to $C_2$ and $C_2$ connects to $C_3$.  If $C_1$ does not connect to $C_3$, then $|C_2| = |C_3| = 1$, $C_3$ is terminal in $D$, and $C_2$ is terminal in $D \setminus C_3$.
\end{lemma}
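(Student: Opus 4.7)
The strategy is to locate a length-$2$ directed path from $C_1$ through $C_2$ to $C_3$ and apply property ($\star$) to constrain the out-neighbourhoods of its middle and final vertex. Pick $c_1 \in C_1$, $c_2 \in C_2$ with $(c_1, c_2) \in E(D)$, and $c_2' \in C_2$, $c_3 \in C_3$ with $(c_2', c_3) \in E(D)$. Since $C_1 \neq C_3$ and $(c_1, c_3) \notin E(D)$ by assumption, any directed $c_1$-to-$c_3$ path in $D$ has length at least $2$.

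The main step is to show $|C_2| = 1$. Suppose $|C_2| \geq 2$. Then $C_2$ is nonterminal (it connects to $C_3$), so by Lemma \ref{le1:bound1} it must be complete, and Lemma \ref{le2:bound1}(i) yields $(c_1, c_2') \in E(D)$. Thus $c_1, c_2', c_3$ is a directed path of length $2$, forcing $d_D(c_1, c_3) = 2$; property ($\star$) then gives $N^+(\{c_2', c_3\}) \subseteq \{c_1, c_2', c_3\}$. But since $C_2$ is complete with $|C_2| \geq 2$, $c_2'$ has some out-neighbour inside $C_2 \setminus \{c_2'\}$, which is disjoint from $\{c_1, c_2', c_3\}$ as the $C_i$ are pairwise disjoint: a contradiction.

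Hence $C_2 = \{c_2\} = \{c_2'\}$, and property ($\star$) applied to $c_1, c_2, c_3$ delivers the single key inclusion $N^+(\{c_2, c_3\}) \subseteq \{c_1, c_2, c_3\}$. The remaining claims fall out directly. If $|C_3| \geq 2$, the undirected strong component $C_3$ supplies $c_3$ with an out-neighbour in $C_3 \setminus \{c_3\}$, which lies outside $\{c_1, c_2, c_3\}$: contradiction, so $|C_3| = 1$. Every out-neighbour of $c_3$ in $D$ must therefore lie in $\{c_1, c_2\}$, but $(c_3, c_1) \in E(D)$ would merge $C_1$ with $C_3$ and $(c_3, c_2) \in E(D)$ would merge $C_2$ with $C_3$ via the cycles $c_1 \to c_2 \to c_3 \to c_1$ and $c_2 \to c_3 \to c_2$ respectively; both contradict $C_1, C_2, C_3$ being distinct strong components, so $C_3$ is terminal in $D$. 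Likewise, any out-neighbour of $c_2$ other than $c_3$ must be $c_1$, which would put $c_1$ and $c_2$ into a common strong component; hence $c_3$ is the unique out-neighbour of $c_2$, so $C_2 = \{c_2\}$ is terminal in $D \setminus C_3$.

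The principal obstacle is Step~2: without invoking Lemma \ref{le2:bound1}(i) to upgrade the generic connection $C_1 \to C_2$ to the specific edge $c_1 \to c_2'$, property ($\star$) cannot be applied to the right length-$2$ path. Once $|C_2| = 1$ is in hand, the three remaining conclusions all read off from the inclusion $N^+(\{c_2, c_3\}) \subseteq \{c_1, c_2, c_3\}$ by observing that any stray out-edge either escapes this set or merges two strong components that were assumed distinct.
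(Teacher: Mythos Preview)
Your proof is correct and follows essentially the same route as the paper: both arguments invoke Lemma~\ref{le2:bound1}(i) to ensure there is a single path $c_1,c_2,c_3$ with $c_1\in C_1$, $c_2\in C_2$, $c_3\in C_3$, and then apply property~($\star$) (equivalently, Remark~\ref{re:star2}) to that path to force $N^+(\{c_2,c_3\})\subseteq\{c_1,c_2,c_3\}$, from which all four conclusions are read off. The only cosmetic difference is that the paper cites Lemma~\ref{le2:bound1}(i) up front to align the vertices, whereas you first pick possibly distinct $c_2,c_2'\in C_2$ and use the lemma inside the case $|C_2|\ge 2$; the logic is the same.
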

\begin{proof}
By Lemma \ref{le2:bound1} \textbf{(i)}, $C_1$ fully connects to $C_2$. Assume that $C_1$ does not connect to $C_3$. Let $c_i \in C_i$, $i=1,2,3$, be such that $(c_1, c_2), (c_2, c_3) \in E(D)$. If $C_2$ has a vertex different from $c_2$, Remark \ref{re:star2} ensures that $(c_1, c_3) \in  E(D)$, which contradicts our hypothesis. Then $\vert C_2 \vert =1$. The same argument applies if $C_3$ has a vertex different from $c_3$, so $\vert C_3 \vert =1$. Finally, Remark \ref{re:star2} applied to the path $c_1, c_2, c_3$ also implies that $C_3$ is terminal in $D$ and $C_2$ is terminal in $D \setminus C_3$.
\end{proof}

The following result characterises all digraphs satisfying condition (\ref{bound1}).

\begin{theorem} \label{th:l=g}
Let $D$ be a connected digraph on $[n]$. The following are equivalent:
\begin{description}
	\item[(i)] \label{it:l=g1} For all $\alpha \in \langle D \rangle$, $\ell(D, \alpha) = n + \cycl(\alpha) - \fix(\alpha)$.
	
	\item[(ii)] \label{it:l=g2} $D$ is closed satisfying property ($\star$).
\end{description}
\end{theorem}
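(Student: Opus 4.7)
The plan is to prove each implication separately. The lower bound $\ell(D,\alpha)\geq n+\cycl(\alpha)-\fix(\alpha)$ is valid for any digraph $D$ and any $\alpha\in\langle D\rangle$ by~(\ref{eq:n-rk}) and Theorem~\ref{Howie-Iwahori}, so for (ii)$\Rightarrow$(i) only the matching upper bound will require work.

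For (ii)$\Rightarrow$(i), my plan is to exploit Lemmas~\ref{le1:bound1}--\ref{le3:bound1}: under~(ii), every strong component of $D$ is either a complete graph $K_m$ or the undirected path $P_3$ (the latter only as a terminal component), and the edges between components are fully-connected bridges in almost every configuration. Given $\alpha\in\langle D\rangle$, I would mimic the Howie--Iwahori construction on the functional digraph of $\alpha$, traversing each tree orbit with as many arcs as its non-root vertices and each cyclic orbit of length $m$ using $m+1$ arcs. The essential verification is that the specific arcs one would use inside $K_n$ are always replaceable by arcs present in $D$ after possibly a local rerouting; the fully-connectedness guaranteed by Lemmas~\ref{le2:bound1}--\ref{le3:bound1} is precisely what makes such replacements available, and within a single complete strong component the Howie--Iwahori word can be used verbatim.

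For (i)$\Rightarrow$(ii), I would establish closedness and property~($\star$) in turn. \emph{Closedness}: if some $(a,b)\in E(D)$ lies on a directed cycle yet $(b,a)\notin E(D)$, then the idempotent arc $(b\to a)\in\langle\bar{D}\rangle=\langle D\rangle$ satisfies $\cycl=0$, $\fix=n-1$, so~(\ref{bound1}) would force $\ell(D,(b\to a))=1$; but no single arc in $D^*$ evaluates to $(b\to a)$, a contradiction. \emph{Property~($\star$)}: assume it fails, so there exists a directed path $v_0,v_1,v_2$ with $d_D(v_0,v_2)=2$ and some $v_3\in N^+(\{v_1,v_2\})\setminus\{v_0,v_1,v_2\}$. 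In the representative case $v_3\in N^+(v_2)$, I would take
\[
\omega \;=\; (v_1\to v_2)(v_2\to v_3)(v_0\to v_1)(v_1\to v_2),
\]
a length-$4$ word in $D^*$ whose evaluation $\alpha$ sends $v_0\mapsto v_2$, $v_1,v_2\mapsto v_3$, and fixes every other vertex. Thus $\fix(\alpha)=n-3$, $\cycl(\alpha)=0$ and (\ref{bound1}) would predict length $3$, so it suffices to show $\ell(D,\alpha)\geq 4$. This I would do by a short case analysis on a hypothetical length-$3$ factorisation $\alpha=e_1e_2e_3$: since $\alpha$ has rank $n-2$, two of the three arcs must strictly reduce rank, and the constraints $(v_0,v_2)\notin E(D)$ together with the image $[n]\setminus\{v_0,v_1\}$ and the kernel class $\{v_1,v_2,v_3\}$ rule out every candidate last arc $e_3$. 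The sub-case $v_3\in N^+(v_1)\setminus N^+(v_2)$ can be treated by an analogous symmetric witness.

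The principal obstacle will be the upper-bound direction (ii)$\Rightarrow$(i): even with the tight structural description of $D$ coming from Lemmas~\ref{le1:bound1}--\ref{le3:bound1}, converting a Howie--Iwahori optimal word (built from arcs of $K_n$) into one of equal length using only arcs of $D$ requires careful bookkeeping on how each orbit of $\alpha$ intersects the strong components of $D$, and a verification that inter-component transitions can always be realised at no extra cost via the fully-connected bridges.
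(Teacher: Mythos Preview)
Your plan aligns with the paper's approach on both directions.

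For (i)$\Rightarrow$(ii), the paper argues exactly as you do for closedness, and for property~($\star$) uses witnesses essentially identical to yours: relabelling the shortest path as $1,2,3$, when $v\in N^+(3)\setminus\{1,2,3\}$ the paper takes $\alpha=3vvv$ (one-line notation), which is your transformation up to relabelling, and checks $g(\alpha)=3$ but $\ell(D,\alpha)=4$; when $v\in N^+(2)\setminus\{1,2,3\}$ it uses the shorter witness $\alpha=3v3v$ with $g(\alpha)=2$ and $\ell(D,\alpha)=3$.

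For (ii)$\Rightarrow$(i), the paper also builds an explicit word of length $g(\alpha)$ by processing strong components in reverse topological order and invoking Theorem~\ref{Howie-Iwahori} inside each complete component, so your outline matches. However, your phrase ``arcs of $K_n$ are always replaceable by arcs present in $D$ after possibly a local rerouting'' glosses over the key accounting step. When $d_D(u,u\alpha)=2$ the single $K_n$-arc $(u\to u\alpha)$ genuinely costs two $D$-arcs, so the rerouting is \emph{not} free arc-for-arc. The compensating fact (the paper's Claim~\ref{claim:d=2}) is that one may always choose the intermediate vertex $u'$ on a shortest $u$--$u\alpha$ path so that $u'\alpha=u\alpha$ too; hence the pair $(u\to u')(u'\to u\alpha)$ simultaneously disposes of two non-fixed points, and the budget balances. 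A second subtlety (Claim~\ref{claim:alpha_SC}) is that the restriction of $\alpha$ to a complete component $C$ may be a nontrivial permutation rather than a singular map; the paper shows $C$ then contains some $h\notin C\alpha$ and perturbs the restriction to a singular $\hat\beta$ sending $h$ into $C$, keeping $\cycl$ unchanged while the extra non-fixed point $h$ is already accounted for in $n-\fix(\alpha)$. These two claims are precisely where Lemmas~\ref{le1:bound1}--\ref{le3:bound1} get used, and they are what you must establish to close the bookkeeping you flag in your final paragraph.
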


\begin{proof}
In order to simplify notation, denote 
\[ g(\alpha) := n + \cycl(\alpha) - \fix(\alpha). \]
First, we show that \textbf{(i)} implies \textbf{(ii)}. Suppose $\ell(D,\alpha) = g(\alpha)$ for all $\alpha \in \genset{D}$. We use the one-line notation for transformations: $\alpha = (1)\alpha \ (2)\alpha \ \dots \ (k)\alpha$, where $x=(x)\alpha$ for all $x >k$, $x \in[n]$. Clearly, if $D$ is not closed, there exists an arc $\alpha \in \genset{D} \backslash D$, so $1 < \ell(D, \alpha) \neq g(\alpha) = 1$. In order to prove that property ($\star$) holds, let $1,2,3$ be a shortest path in $D$. If $(2 \to v) \in \genset{D}$, for some $v \in [n]\setminus\{1,2,3\}$, then $\alpha = 3v3v \in \genset{D}$, but $g(\alpha) = 2 \neq \ell(D,\alpha) = 3$. If $(3 \to v) \in \genset{D}$, then $\alpha = 3vvv \in \genset{D}$, but $g(\alpha) = 3 \neq \ell(D,\alpha) = 4$. Therefore, $N^+(\{2,3 \}) \subseteq \{ 1,2,3 \}$, and ($\star$) holds. \\ 

Conversely, we show that \textbf{(ii)} implies \textbf{(i)}. Let $\alpha \in \genset{D}$. We remark that any cycle of $\alpha$ belongs to a strong component of $D$. 

\begin{claim} \label{claim:alpha_SC}
Let $C$ be a strong component of $D$. Then either $\alpha$ fixes all vertices of $C$ or $|(C \alpha) \cap C| < |C|$.
\end{claim}
\begin{proof}
Suppose that $\alpha \vert_C$, the restriction of $\alpha$ to $C$, is non-trivial and $|(C \alpha) \cap C| = |C|$. Then $\alpha \vert_C$ is a permutation of $C$. Let $u \in C$ and suppose that $(u \to v)$  is the first arc moving $u$ in a word expressing $\alpha$ in $D^*$. If $v \in C$, we have $u \alpha = v \alpha$, which contradicts that $\alpha \vert_C$ is a permutation. If $v \in C'$ for some other strong component $C'$ of $D$, then $u \alpha \notin C$ which again contradicts our assumption.
\end{proof}

\begin{claim} \label{claim:d=2}
Let $u,v \in [n]$ be such that $u \alpha = v$. If $d_D(u, v) = 2$, then: 
\begin{enumerate}
	\item $v$ is in a terminal component of $D$.
	
	\item There is a path $u,w,v$ of length $2$ in $D$ such that $w \alpha = v \alpha = v$; for any other path $u,x,v$ of length $2$ in $D$, we have $x \alpha \in \{x, v\}$.
\end{enumerate}
\end{claim}
\begin{proof}
Let $C_1$ and $C_2$ be strong components of $D$ such that $u \in C_1$ and $v \in C_2$. We analyse the four possible cases in which $d_D(u,v) = 2$. In the first three cases, we use the fact that $\genset{P_3} \cong \mathrm{O}_3$, hence we can order $u < w < v$ and $\alpha$ is an increasing transformation of the ordered set $\{u,w,v\}$; thus $u \alpha = w \alpha = v \alpha = v$.
\begin{description}
\item[Case 1:] $C_1 = C_2 $. By Lemma \ref{le1:bound1}, $C_1 \cong P_3$ and it is a terminal component. Therefore, \emph{2.} holds as there is a unique path from $u$ to $v$.

\item[Case 2:] $C_1$ connects to $C_2$ and $|C_2| \ne 2$. As $d_D(u, v) = 2$, $C_1$ does not fully connect $C_2$, so, by Lemma \ref{le2:bound1}, $|C_2| = 1$, $C_2$ is terminal, $|C_1| = 2$, and the vertex $w \in C_1$ connecting to $C_2=\{ v\}$ has out-degree $2$. Then, by property ($\star$), $u,w,v$ is the unique path from $u$ to $v$. 

\item[Case 3:] $C_1$ connects to $C_2$ and $|C_2| = 2$. As $d_D(u, v) = 2$, $C_1$ does not fully connect $C_2$, so, by Lemma \ref{le2:bound1}, $C_2$ is terminal and $u,w,v$ is the unique path of length two from $u$ to $v$, where $w$ is the other vertex of $C_2$. 

\item[Case 4:] $C_1$ does not connect to $C_2$. Since $d_D(u, v) = 2$, there exist strong components $C^{(1)}, \dots, C^{(k)}$ such that $C_1$ connects to $C^{(i)}$ and $C^{(i)}$ connects to $C_2$, for all $1 \le i \le k$. By Lemma \ref{le3:bound1}, $C^{(i)} = \{ x_i \}$, $C_2 = \{v\}$ is terminal and $N^+(x_i) = \{v\}$ for all $i$. Thus $u, x_i, v$ are the only paths of length two from $u$ to $v$; in particular, $x_i \alpha \in \{x_i, v\}$ for all $x_i$. As $u \alpha = v$, there must exist $1 \le j \le k$ such that $w := x_j$ is mapped to $v$.
\end{description}
\end{proof}

Now we produce a word $\omega \in D^*$ expressing $\alpha$ of length $g(\alpha)$. Define 
\[ U := \{ u \in D : d_D(u, u\alpha) = 2 \}. \]
For every $u \in U$, let $u^\prime$ be a vertex in $D$ such that $u, u^\prime , u \alpha$ is a path and $u^\prime \alpha = u \alpha$. The existence of $u'$ is guaranteed by Claim \ref{claim:d=2}. Define a word $\omega_0 \in D^*$ by
\[ 	\omega_0 := \bigcirc_{u \in U} (u \to u^\prime) (u^\prime \to u \alpha ). \] 

Sort the strong components of $D$ in topological order: $C_1, \dots, C_k$, i.e. for $i \neq j$, $C_i$ connects to $C_j$ only if $j > i$. For each $1 \leq i \leq k$, define
\[ 	S_i := \{v \in C_i \setminus (U \cup U^\prime) : v \alpha \in C_i\}, \]
where $U^\prime := \{ u^\prime : u \in U \}$, and consider the transformation $\beta_i : C_i \to C_i$ defined by
\[ 	x \beta_i = \begin{cases}
		x \alpha &\text{if } x \in S_i\\
		x &\text{otherwise}.
	\end{cases} \]
If $|C_i| \le 2$ or $C_i \cong P_3$, then $\cycl(\beta_i) = 0$ and $\beta_i$ can be computed with $\vert C_i \vert - \fix(\beta_i)$ arcs. Otherwise, $C_i$ is a complete undirected graph. If $\beta_i \in \Sing(C_i)$, then by Theorem \ref{Howie-Iwahori}, there is a word $\omega_i \in C_i^* \subseteq D^*$ of length $\vert C_i \vert + \cycl(\beta_i) - \fix(\beta_i)$ expressing $\beta_i$. Suppose now that $\beta_i$ is a non-identity permutation of $C_i$. By Claim \ref{claim:alpha_SC}, $\alpha$ does not permute $C_i$  and there exists $h_i \in C_i \setminus (C_i \alpha)$. Note that $h_i \in C_i \setminus S_i$. Define $\hat{\beta_i} \in \Sing(C_i)$ by
\[ 	x \hat{\beta_i} = \begin{cases}
		x \alpha &\text{if $x\in S_i$}\\
		a_i & \text{if } x = h_i \\
		x &\text{otherwise},
	\end{cases}\]
where $a_i$ is any vertex in $S_i$. Then $\alpha \vert_{S_i} = \hat{\beta}\vert_{S_i}$. Again by Theorem \ref{Howie-Iwahori}, there is a word $\omega_i \in C_i^* \subseteq D^*$ of length $\vert C_i \vert + \cycl(\hat{\beta}_i) - \fix(\hat{\beta}) =\vert C_i \vert + \cycl(\beta_i) - \fix(\beta_i) $ expressing $\hat{\beta}_i$.

The following word maps all the vertices in $[n] \setminus (U \cup U^\prime \cup C_i)$ that have image in $C_i$:
\[ 	\omega_i' = \bigcirc \left\{ (a \to a\alpha) : a \in [n] \setminus (U \cup U^\prime \cup C_i), a\alpha \in C_i \right\}. \]

Finally, let
\[ 	\omega := \omega_0 \omega_k \omega_k' \dots \omega_1 \omega_1' \in D^*. \]
It is easy to check that $\omega$ indeed expresses $\alpha$. Since $\sum_{i=1}^k \fix(\beta_i) = \fix(\alpha) + \sum_{i=1}^k \vert C_i \setminus  S_i \vert$ and $\sum_{i=1}^k \ell(\omega^\prime_i) = \sum_{i=1}^k \vert C_i \setminus ( U \cup U^\prime \cup S_i) \vert$, we have
\[ 	\ell(\omega) = 2 \vert U \vert + \sum_{i=1}^k (\ell(\omega_i) + \ell(\omega_i^\prime)) =  n + \sum_{i=1}^k \cycl(\beta_i) -  \fix(\alpha) =  g(\alpha). \]
\end{proof}


\subsection{Digraphs satisfying condition (\ref{bound2})}

The characterisation of connected digraphs satisfying condition (\ref{bound2}) is based on the classification of connected digraphs $D$ such that $\cycl(\alpha) = 0$, for all $\alpha \in \genset{D}$. 

For $k \ge 3$, let $\Theta_k$ be the directed cycle of length $k$. Consider the digraphs $\Gamma_1, \ \Gamma_2, \ \Gamma_3$ and $\Gamma_4$ as illustrated below:

 \[ \begin{tikzpicture}[vertex/.style={circle, draw, fill=none, inner sep=0.2cm}]
    \vertex{1}{0}{2}
    \node at (0,3.4) {1}; 
    \vertex{2}{1}{2}
    \node at (1,3.4) {2};  
    \vertex{3}{2}{2}
    \node at (2,3.4) {3};   
   \vertex{4}{1}{1} 
   \node at (1,1.7) {4};  
   \vertex{5}{1}{0}
   \node at (1,0) {5};   

    \edge{1}{4}
    \edge{2}{4}
    \edge{3}{4}
    \arc{4}{5}

\node at (1,-1) {$\Gamma_1$};
  \end{tikzpicture}
  \qquad\qquad
  \begin{tikzpicture}[vertex/.style={circle, draw, fill=none, inner sep=0.2cm}]
\node at (1,-1) {$\Gamma_2$};

    \vertex{1}{1}{1} 
    \node at (1,1.7) {2};   
    \vertex{2}{2}{1}
    \node at (2,1.7) {1};    
   \vertex{3}{1}{0}
    \node at (1,0) {3};   
    \vertex{4}{0}{1} 
     \node at (0,1.7) {4};    
   \vertex{5}{0}{2}
       \node at (0,3.4) {5};    

    \edge{1}{3}
    \edge{2}{3}
    \edge{3}{4}
    \arc{4}{5}
  \end{tikzpicture} 
    \qquad\qquad
  \begin{tikzpicture}[vertex/.style={circle, draw, fill=none, inner sep=0.2cm}]
\node at (1,-1) {$\Gamma_3$};

    \vertex{1}{2}{0}
    \node at (2,0) {1}; 
    \vertex{2}{0}{0}
        \node at (0,0) {2};  
    \vertex{3}{1}{1} 
        \node at (1,1.7) {3}; 
    \vertex{4}{1}{2} 
        \node at (1,3.4) {4}; 
    
    \arc{1}{2}
    \arc{2}{3}
    \arc{3}{1}
    \arc{3}{4}
  \end{tikzpicture}
  \qquad\qquad
  \begin{tikzpicture}[vertex/.style={circle, draw, fill=none, inner sep=0.2cm}]
\node at (1,-1) {$\Gamma_4$};

    \vertex{1}{0}{1}
    \node at (0,1.7) {1};   
    \vertex{2}{0}{0}
     \node at (0,0) {2};  
    \vertex{3}{2}{0} 
    \node at (2,0) {3}; 
    \vertex{4}{2}{1}
    \node at (2,1.7) {4}; 
   \vertex{5}{2}{2}
   \node at (2,3.4) {5}; 
    
    \arc{1}{2}
    \arc{2}{3}
    \arc{3}{4}
    \arc{4}{1}
	\arc{4}{5}
  \end{tikzpicture} \]

\begin{lemma} \label{prop:acyclic_alpha}
Let $D$ be a connected digraph on $[n]$. The following are equivalent:
\begin{description}
	\item[(i)] For all $\alpha \in \langle D \rangle$, $\cycl(\alpha) = 0$.
	
	\item[(ii)] $D$ has no subdigraph isomorphic to $\Gamma_1$, $\Gamma_2$, $\Gamma_3$, $\Gamma_4$, or $\Theta_k$, for all $k \ge 5$.
\end{description}
\end{lemma}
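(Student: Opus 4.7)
The plan is to prove the equivalence in the two natural directions, in each case by contrapositive.

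For (i) $\Rightarrow$ (ii), I would assume $D$ contains a subdigraph isomorphic to one of $\Gamma_1, \Gamma_2, \Gamma_3, \Gamma_4$, or $\Theta_k$ ($k\geq 5$), and for each case exhibit an explicit word in the arcs of this subdigraph whose evaluation $\alpha\in\langle D\rangle$ has a cyclic orbit of size at least $2$. For $\Gamma_3$, for example, the word $(3\to 4)(2\to 3)(1\to 2)(3\to 1)$ produces $\alpha$ with $1\alpha=2$, $2\alpha=1$, $3\alpha=4=4\alpha$, yielding the cyclic orbit $\{1,2\}$. The general recipe is to use the pendant arc to ``park'' the vertex that would otherwise synchronise with a vertex of the cycle via its predecessor in the directed cycle; once parked, the remaining short cycle can be walked to realise a proper cyclic permutation. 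The same pattern yields explicit witnesses for $\Gamma_4$ (park the pendant end, then traverse the 4-cycle), for $\Gamma_1$ and $\Gamma_2$ (park via the pendant and use the bidirectional spokes to effect a $2$-cycle among the leaves), and for $\Theta_k$ with $k\geq 5$ (traverse the directed cycle in a carefully staggered order so that the slot trajectories close up on a proper subset).

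For (ii) $\Rightarrow$ (i), I would argue by contrapositive: given $\alpha\in\langle D\rangle$ with cyclic orbit $\Omega=\{v_0,\dots,v_{k-1}\}$ such that $v_i\alpha=v_{i+1\bmod k}$, fix a shortest word $\omega=(a_1\to b_1)\cdots(a_m\to b_m)$ expressing $\alpha$ and, for each $v_i$, extract the subsequence of arcs that actually moves the slot of $v_i$ along its trajectory from $v_i$ to $v_{i+1}$. Concatenating these subsequences around the cycle produces a closed directed walk in $D$, hence a directed cycle $C\subseteq D$. If $|C|\geq 5$, then $D$ contains $\Theta_{|C|}$ and we are done; otherwise $|C|\in\{2,3,4\}$, and the key step is to produce, from the shortness of $\omega$ and the absence of feeders into $\Omega$, an additional vertex $w\notin C$ together with an arc connecting $C$ to $w$ (or completing the bidirectional ``spokes'' of $\Gamma_1/\Gamma_2$). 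This extra arc is forced because without it every other slot that ever enters $C$ would end up in $\Omega$, contradicting cyclicity; combining $C$ with the extracted arc(s) yields a subdigraph isomorphic to $\Gamma_3$ (if $|C|=3$), $\Gamma_4$ (if $|C|=4$), or $\Gamma_1$/$\Gamma_2$ (in the bidirected $|C|=2$ regime arising from the closure).

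The main obstacle is clearly the backward direction: given only the abstract existence of a cyclic orbit and a shortest word, one has to prove that precisely the combinatorial shapes $\Gamma_1,\Gamma_2,\Gamma_3,\Gamma_4$ (and no smaller configuration) suffice to capture the ``parking'' mechanism that allows a non-feeding cyclic orbit to exist. The argument will hinge on a minimality analysis of $\omega$: any arc appearing in $\omega$ that is not strictly necessary to transport the vertices of $\Omega$ around their cycle must be doing the job of diverting some other slot away from $\Omega$, and the smallest configurations in which this diversion can happen are exactly the five listed subdigraphs. Handling $\Gamma_1$ and $\Gamma_2$ requires the most care, because the ``cycle'' part there is of length $2$ (coming from the closure of an undirected edge), so the forbidden pattern must be detected from the interaction of the pendant with the bidirectional spokes rather than from a long directed cycle.
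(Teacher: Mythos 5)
Your forward direction is fine and is essentially the paper's argument: for each of $\Gamma_1,\Gamma_2,\Gamma_3,\Gamma_4,\Theta_k$ one writes down an explicit word whose evaluation has a cyclic orbit (your $\Gamma_3$ witness $(3\to4)(2\to3)(1\to2)(3\to1)=2144$ is exactly the paper's), though you should actually exhibit the words for $\Gamma_1$, $\Gamma_2$, $\Gamma_4$ and the staggered traversal for $\Theta_k$ rather than gesture at a ``general recipe''; the $\Theta_k$ witness in particular is not obvious.

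The backward direction, however, has a genuine gap: the entire content of the implication is the step you describe as ``the key step,'' namely forcing one of the five forbidden configurations out of the existence of a cyclic orbit, and your justification for it (``without it every other slot that ever enters $C$ would end up in $\Omega$, contradicting cyclicity'') is an assertion, not an argument. It is not even clearly true as stated: slots entering $C$ can terminate at vertices of $C\setminus\Omega$, and the obstruction to rotating $\Omega$ is dynamical (you cannot permute a set with defect-one idempotents without first evacuating a position and later restoring it), not merely a statement about where vertices end up. The case $|C|=2$ is where the plan breaks most visibly: there the forbidden patterns $\Gamma_1$ and $\Gamma_2$ are a \emph{claw} $K_{3,1}$ of bidirected edges plus a pendant arc, whereas a single bidirected edge with a pendant arc --- indeed an arbitrarily long undirected path --- is \emph{not} forbidden and generates only order-preserving maps ($\genset{P_r}=\mathrm{O}_r$), so finding ``an additional vertex $w\notin C$ together with an arc connecting $C$ to $w$'' proves nothing; you must force a degree-$3$ branch vertex, and nothing in your sketch does so. Similarly, for $|C|=3,4$ you must rule out the possibility that the cycle is a terminal strong component (e.g.\ $D=\Theta_4$ itself, where no out-arc exists and yet you still have to show no cyclic orbit arises). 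The paper sidesteps all of this by passing to the closure $\bar D$ and classifying its strong components under hypothesis \textbf{(ii)}: each is an undirected path, an undirected $3$- or $4$-cycle, or a claw $K_{3,1}$, with the non-paths terminal; a cyclic orbit must live inside one strong component, the terminal small components are dispatched by a finite check, and the path components by $\genset{P_r}=\mathrm{O}_r$. If you want to keep your word-minimality route you would essentially have to re-derive this structural classification along the way, so as written the proposal does not constitute a proof of \textbf{(ii)}$\Rightarrow$\textbf{(i)}.
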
	
\begin{proof}
In order to prove that \textbf{(i)} implies \textbf{(ii)}, we show that if $\Gamma$ is equal to $\Gamma_i$ or $\Theta_k$, for $i\in [4]$, $k \ge 5$, then there exists $\alpha \in \genset{\Gamma}$ such that $\cycl(\alpha) \neq 0$.  
\begin{itemize}
\item If $\Gamma = \Gamma_1$, take
\[ 	\alpha := (3 \to 4) (4 \to 5) (1 \to 4) (4 \to 3) (2 \to 4) (4 \to 1) (3 \to 4) (4 \to 2) = 21555. \]

\item If $\Gamma = \Gamma_2$, take
\[ 	\alpha := (3 \to 4) (4 \to 5) (1 \to 3) (3 \to 4) (2 \to 3) (3 \to 1) (4 \to 3) (3 \to 2) = 21555. \]

\item If $\Gamma = \Gamma_3$, take
\[ 	\alpha := (3 \to 4) (2 \to 3) (1 \to 2) (3 \to 1) = 2144. \]

\item If $\Gamma = \Gamma_4$, take
\[	\alpha = (3 \to 4) (4 \to 5) (2 \to 3) (3 \to 4) (1 \to 2) (4 \to 1) = 21555. \]

\item Assume $\Gamma = \Theta_k$ for $k \ge 5$. Consider the following transformation of $[k]$:
	\[ (u \Rightarrow v) := (u \to u_1) \dots (u_{d-1} \to v), \]
where $u,u_1, \dots, u_{d-1}, v$ is the unique path from $u$ to $v$ on the cycle $\Theta_k$. Take
\[ 	\alpha := (1 \Rightarrow k-3) (k \Rightarrow k-4) (k-1 \Rightarrow 1) (k-2 \Rightarrow k) (k-3 \Rightarrow k-1) (k-4 \Rightarrow k-2). \]
Then, $\alpha = (k-1)(k-1) \dots (k-1) \ k \ 1 \ (k-2) $, where $(k-1)$ appears $k-3$ times, has the cyclic component $(k-2, k)$.
\end{itemize}

Conversely, assume that $D$ satisfies \textbf{(ii)}. If $n \leq 3$, it is clear that $\cycl(\alpha) = 0$, for all $\alpha \in \genset{D}$, so suppose $n \geq 4$. We first obtain some key properties about the strong components of $\bar{D}$.

\begin{claim} \label{claim:SC_path}
Any strong component of $\bar{D}$ is an undirected path, an undirected cycle of length 3 or 4, or a claw $K_{3,1}$ (i.e. a bipartite undirected graph on $[4] = [3] \cup \{4\}$). Moreover, if a strong component of $D$ is not an undirected path, then it is terminal.
\end{claim}
\begin{proof}
Let $C$ be a strong component of $\bar{D}$. Clearly, $C$ is undirected and, by \textbf{(ii)}, it cannot contain a cycle of length at least 5. If $C$ has a cycle of length $3$ or $4$, then the whole of $C$ must be that cycle and $C$ is terminal (otherwise, it would contain $\Gamma_3$ or $\Gamma_4$, respectively). If $C$ has no cycle of length $3$ and $4$, then $C$ is a tree. It can only be a path or $K_{3,1}$, for otherwise it would contain $\Gamma_1$ or $\Gamma_2$; clearly, $K_{3,1}$ may only appear as a terminal component.
\end{proof}

Suppose there is $\alpha \in \genset{D}$ that has a cyclic orbit (so $\cycl(\alpha) \neq 0$). This cyclic orbit must be contained in a strong component $C$ of $\bar{D}$, and Claim \ref{claim:SC_path} implies that $C \cong \Gamma$, where $\Gamma \in \{ K_{3,1}, \bar{\Theta}_s, P_r : s \in \{3,4\}, r \in \mathbb{N} \}$. If $\Gamma = K_{3,1}$ or $\Gamma = \bar{\Theta}_s$, then $C$ is a terminal component, so $\alpha$ acts on $C$ as some transformation $\beta \in \genset{ \Gamma}$; however, it is easy to check that no transformation in $\genset{ \Gamma}$ has a cyclic orbit. If $\Gamma = P_r$, for some $r$, then $\alpha$ acts on $C$ as a partial transformation $\beta$ of $P_r$. Since $\genset{P_r} = \mathrm{O}_r$, $\beta$ has no cyclic orbit.
\end{proof}

We introduce a new property of a connected digraph $D$:
\begin{description}
	   \item[($\star\star$)] \label{it:SC_2} For every strong component $C$ of $D$, $|C| \le 2$ if $C$ is nonterminal, and $|C| \le 3$ if $C$ is terminal.
\end{description}

\begin{lemma}\label{le:bound2}
Let $D$ be a closed connected digraph on $[n]$ satisfying property ($\star$). The following are equivalent:
\begin{description}
\item[(i)] $D$ satisfies property ($\star\star$).
\item[(ii)] $D$ has no subdigraph isomorphic to $\Gamma_1$, $\Gamma_2$, $\Gamma_3$, $\Gamma_4$, or $\Theta_k$, for some $k \ge 5$.
\end{description}
\end{lemma}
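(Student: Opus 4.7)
My plan is to prove both implications via their contrapositives, using the structural information Lemma~\ref{le1:bound1} provides: every strong component of $D$ is either an undirected path $P_3$ (and then terminal) or a complete digraph $K_m$ for some $m \geq 1$.

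For \textbf{(i) $\Rightarrow$ (ii)}, suppose $\Gamma$ is a subdigraph of $D$ with $\Gamma \in \{\Gamma_1, \Gamma_2, \Gamma_3, \Gamma_4\} \cup \{\Theta_k : k \geq 5\}$. The key observation driving each case is that $\Gamma$ contains a strongly connected subgraph of prescribed size: the whole $k$-cycle in $\Theta_k$ (size $k \geq 5$); the $K_{1,3}$ of $\Gamma_1$ or $\Gamma_2$ (size $4$, since the undirected edges depicted in $\Gamma_i$ are $2$-cycles in $D$); the directed $3$-cycle of $\Gamma_3$ (size $3$); and the directed $4$-cycle of $\Gamma_4$ (size $4$). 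These vertices therefore lie in a single strong component $C$ of $D$. The remaining arc of $\Gamma$ (to $v_5$ in $\Gamma_1, \Gamma_2, \Gamma_4$, or from $v_3$ to $v_4$ in $\Gamma_3$) either adds one more vertex to $C$, or certifies that $C$ is nonterminal; in either case $C$ is nonterminal with $|C| \geq 3$ or terminal with $|C| \geq 4$, so~($\star\star$) fails.

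For \textbf{(ii) $\Rightarrow$ (i)}, suppose some strong component $C$ of $D$ is nonterminal with $|C| \geq 3$, or terminal with $|C| \geq 4$. Since $P_3$ is terminal of size $3$, Lemma~\ref{le1:bound1} forces $C = K_m$, and I would split into three subcases. If $m \geq 3$ and $C$ is nonterminal, I pick an outgoing arc $c \to v$ with $c \in C$ and $v \notin C$, and combine it with a directed $3$-cycle of $C$ through $c$ to exhibit $\Gamma_3$. If $C$ is terminal with $m = 4$, then $K_4$ itself already contains $\Gamma_3$ as a (non-induced) subdigraph, since all arcs between any four vertices are present. If $C$ is terminal with $m \geq 5$, then $K_m$ contains a directed $m$-cycle, hence $\Theta_m$.

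The main subtlety, and the step I would handle most carefully, is recognising in the first direction that the undirected edges drawn in $\Gamma_1$ and $\Gamma_2$ correspond to pairs of opposite arcs in $D$, which is what makes those four vertices strongly connected. After that, both directions reduce to a short case analysis on the form of the offending strong component ($K_m$ for various small $m$) and whether it is terminal.
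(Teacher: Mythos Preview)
Your argument is correct and follows the same strategy as the paper: invoke Lemma~\ref{le1:bound1} to determine the possible shapes of strong components, then run a short case analysis exhibiting one of the forbidden subdigraphs. The paper's version is terser---it declares the forward direction ``easy to check'' and, for the converse, exhibits $\Gamma_4$ for a large nonterminal component and $\Gamma_3$ for a large terminal one---while your case split (using $\Gamma_3$ in the nonterminal case, and $\Gamma_3$ or $\Theta_m$ in the terminal case) is equally valid; indeed your choice of $\Gamma_3$ for a nonterminal $K_m$ with $m\ge 3$ is the more transparent one, and your terminal split into $m=4$ and $m\ge 5$ could be collapsed into a single appeal to $\Gamma_3$.
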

\begin{proof}
If \textbf{(i)} holds, it is easy to check that $D$ does not contain any subdigraphs isomorphic to $\Gamma_1$, $\Gamma_2$, $\Gamma_3$, $\Gamma_4$, or $\Theta_k$ for some $k \ge 5$.

Conversely, suppose that \textbf{(ii)} holds. Let $C$ be a strong component of $D$. If $C$ is non-terminal, Lemma \ref{le1:bound1} implies that $C$ is complete; hence, $|C| \le 2$ as otherwise $D$ would contain $\Gamma_4$ as a subdigraph. If $C$ is terminal, Lemma \ref{le1:bound1} implies that $C$ is complete or $P_3$; hence, $|C| \le 3$ as otherwise $D$ would contain $\Gamma_3$ as a subdigraph.
\end{proof}

\begin{theorem} \label{th:l=n-fix}
Let $D$ be a connected digraph on $[n]$. The following are equivalent:
\begin{description}
	\item[(i)] For all $\alpha \in \langle D \rangle$, $\ell(D, \alpha) = n - \fix(\alpha)$.
	
	\item[(ii)] $D$ is closed satisfying properties ($\star$) and ($\star\star$). 
\end{description}
\end{theorem}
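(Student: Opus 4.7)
The plan is to recognise that condition (i) is exactly the conjunction of two conditions already characterised in the paper, and then to glue together Theorem \ref{th:l=g}, Lemma \ref{prop:acyclic_alpha} and Lemma \ref{le:bound2}. The guiding observation is the chain of inequalities
\[ \ell(D,\alpha) \;\ge\; n + \cycl(\alpha) - \fix(\alpha) \;\ge\; n - \fix(\alpha) \]
from \eqref{eq:n-rk}. Equality throughout for every $\alpha \in \genset{D}$ is therefore equivalent to the combination of (a) $\ell(D,\alpha) = n + \cycl(\alpha) - \fix(\alpha)$ for all $\alpha$, and (b) $\cycl(\alpha) = 0$ for all $\alpha \in \genset{D}$. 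Both (a) and (b) have already been characterised.

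For the implication \textbf{(i)} $\Rightarrow$ \textbf{(ii)}, assume $\ell(D,\alpha) = n - \fix(\alpha)$ for every $\alpha \in \genset{D}$. The chain above forces both (a) and (b) to hold. By Theorem \ref{th:l=g}, (a) gives that $D$ is closed satisfying property $(\star)$. By Lemma \ref{prop:acyclic_alpha}, (b) tells us $D$ contains no subdigraph isomorphic to $\Gamma_1$, $\Gamma_2$, $\Gamma_3$, $\Gamma_4$, or $\Theta_k$ for $k \ge 5$. Since $D$ is already closed and satisfies $(\star)$, Lemma \ref{le:bound2} converts this forbidden-subdigraph condition into property $(\star\star)$.

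For the converse \textbf{(ii)} $\Rightarrow$ \textbf{(i)}, assume $D$ is closed and satisfies both $(\star)$ and $(\star\star)$. Lemma \ref{le:bound2} then yields that $D$ has no subdigraph isomorphic to $\Gamma_1,\Gamma_2,\Gamma_3,\Gamma_4$, or $\Theta_k$ for $k \ge 5$, and Lemma \ref{prop:acyclic_alpha} therefore ensures $\cycl(\alpha) = 0$ for every $\alpha \in \genset{D}$. On the other hand, Theorem \ref{th:l=g} gives $\ell(D,\alpha) = n + \cycl(\alpha) - \fix(\alpha)$ for every $\alpha$. Substituting $\cycl(\alpha) = 0$ yields $\ell(D,\alpha) = n - \fix(\alpha)$, as required.

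There is no genuine obstacle here: the work has already been done in the preceding lemmas and theorem. The only care needed is to verify that the two hypotheses of Lemma \ref{le:bound2} — closedness and property $(\star)$ — are always available exactly when we wish to invoke it, which is immediate in both directions because condition \textbf{(ii)} bundles them together and, in the forward direction, they are provided by Theorem \ref{th:l=g} before Lemma \ref{prop:acyclic_alpha} is applied.
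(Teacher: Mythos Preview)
Your proposal is correct and follows precisely the same route as the paper's own proof: reduce condition \textbf{(i)} to the conjunction of condition (\ref{bound1}) and $\cycl(\alpha)=0$ for all $\alpha\in\genset{D}$, then apply Theorem~\ref{th:l=g}, Lemma~\ref{prop:acyclic_alpha}, and Lemma~\ref{le:bound2} in turn. The paper compresses this into two sentences, but the logical structure is identical to yours.
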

\begin{proof}
Clearly, $D$ satisfies \textbf{(i)} if and only if it satisfies condition (\ref{bound1}) and $\cycl(\alpha) = 0$, for all $\alpha \in \langle D \rangle$. By Theorem \ref{th:l=g}, Lemma \ref{prop:acyclic_alpha} and Lemma \ref{le:bound2}, $D$ satisfies \textbf{(i)} if and only if  $D$ satisfies \textbf{(ii)}.
\end{proof}


\subsection{Digraphs satisfying condition (\ref{bound3})}

The following result characterises digraphs satisfying condition (\ref{bound3}).

\begin{theorem}\label{thm-bands}
Let $D$ be a connected digraph on $[n]$. The following are equivalent:
\begin{description}
	\item[(i)] For every $\alpha \in \langle D \rangle$, $\ell(D, \alpha) = n - \rk(\alpha)$.
	
	\item[(ii)] $\langle D \rangle$ is a band, i.e. every $\alpha \in \langle D \rangle$ is idempotent.
	
	\item[(iii)] Either $n=2$ and $D \cong K_2$, or there exists a bipartition $V_1 \cup V_2$ of $[n]$ such that $(i_1,i_2) \in E(D)$ only if $i_1 \in V_1$, $i_2 \in V_2$.
\end{description}
\end{theorem}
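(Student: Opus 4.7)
The plan is to prove the cycle of implications $(i) \Rightarrow (ii) \Rightarrow (iii) \Rightarrow (i)$. The implication $(i) \Rightarrow (ii)$ is immediate from the chain (\ref{eq:n-rk}):
\[ \ell(D,\alpha) \ge n + \cycl(\alpha) - \fix(\alpha) \ge n - \fix(\alpha) \ge n - \rk(\alpha). \]
Equality of the two extremes forces $\cycl(\alpha) = 0$ and $\fix(\alpha) = \rk(\alpha)$; the latter means every element of $\Ima(\alpha)$ is fixed by $\alpha$, so $x \alpha^2 = (x\alpha)\alpha = x\alpha$ for all $x \in [n]$, giving $\alpha^2 = \alpha$.

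For $(ii) \Rightarrow (iii)$, first observe that if $n = 2$ then connectedness leaves only a single arc (which fits the bipartition clause) or $K_2$. For $n \ge 3$, the main step is to show that $D$ contains no directed walk of length two. Given pairwise distinct $a, b, c$ with $(a,b), (b,c) \in E(D)$, the word $(b \to c)(a \to b) \in D^*$ evaluates to the transformation $\alpha$ acting as $a \mapsto b$, $b \mapsto c$ and fixing all other vertices; applying $\alpha$ again sends $a$ to $c$, so $\alpha \ne \alpha^2$, contradicting the band hypothesis. The remaining obstruction is a 2-cycle $(a,b),(b,a) \in E(D)$; I would invoke unilateral connectedness together with $n \ge 3$ to find a vertex $c \notin \{a,b\}$ and a directed edge between $c$ and $\{a,b\}$, then concatenate this edge with $(a,b)$ or $(b,a)$ to produce a length-two directed path with pairwise distinct endpoints, reducing to the previous case. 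Once no length-two directed walk exists, every vertex $v$ satisfies $\deg^-(v) = 0$ or $\deg^+(v) = 0$; connectedness forbids isolated vertices, so the partition $V_1 := \{v : \deg^-(v) = 0\}$, $V_2 := [n] \setminus V_1$ witnesses $(iii)$.

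For $(iii) \Rightarrow (i)$, the case $n = 2$ and $D \cong K_2$ is direct: $\genset{K_2} = \Sing_2$ consists only of rank-one constants, whose lengths are given by Lemma \ref{le:constant}. In the bipartite case, every arc $(a \to b)$ of $D$ has $a \in V_1$ and $b \in V_2$, so it fixes $V_2$ pointwise and maps $a$ into $V_2$. Hence in any word $(a_1 \to b_1)(a_2 \to b_2)\cdots(a_k \to b_k) \in D^*$, each $a_i$ is moved into $V_2$ by the earliest arc with source $a_i$ and then stays fixed, while vertices of $V_2$ are fixed throughout. Therefore every $\alpha \in \genset{D}$ fixes $V_2$ pointwise and acts on $V_1$ by fixing some vertices and moving the rest into $V_2$; if $\alpha$ moves exactly $k$ vertices $v_1, \dots, v_k \in V_1$ to $b_1, \dots, b_k \in V_2$, then $\rk(\alpha) = n - k$ and the length-$k$ word $(v_1 \to b_1)\cdots(v_k \to b_k)$ expresses $\alpha$, matching the lower bound $n - \rk(\alpha)$ from (\ref{eq:n-rk}).

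I expect the hardest step to be the 2-cycle subcase of $(ii) \Rightarrow (iii)$: carefully using unilateral connectedness of $D$ to promote a 2-cycle together with any incident edge to a third vertex into a length-two directed path with pairwise distinct endpoints, rather than getting stuck with a trivial repetition.
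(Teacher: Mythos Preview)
Your proof is correct and follows essentially the same route as the paper: $(i)\Rightarrow(ii)$ via the inequality chain~(\ref{eq:n-rk}), $(ii)\Rightarrow(iii)$ by observing that $(b\to c)(a\to b)$ is non-idempotent for pairwise distinct $a,b,c$, and $(iii)\Rightarrow(i)$ by writing $\alpha$ as a product of $n-\rk(\alpha)$ commuting arcs from $V_1$ to $V_2$. In fact your treatment of the $2$-cycle subcase in $(ii)\Rightarrow(iii)$ is more careful than the paper's, which passes directly from ``no directed path of length two on three distinct vertices'' to ``every vertex has in-degree zero or out-degree zero'' without explicitly explaining why, for $n\ge 3$ and $D$ connected, a $2$-cycle must sit inside such a path.
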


\begin{proof}
Clearly \textbf{(i)} implies \textbf{(ii)}: if $\ell(D,\alpha) = n - \rk(\alpha)$, then $\rk(\alpha) = \fix(\alpha)$ by inequality (\ref{eq:n-rk}), so $\alpha$ is idempotent. 

Now we prove that \textbf{(ii)} implies \textbf{(iii)}. If there exist $u, v, w \in [n]$ pairwise distinct such that $(u,v), (v,w) \in E(D)$ , then $\alpha = (v \to w) (u \to v)$ is not an idempotent. Therefore, for $n \ge 3$, if every $\alpha \in \langle D \rangle$ is idempotent, then a vertex in $D$ either has in-degree zero or out-degree zero: this corresponds to the bipartition of $[n]$ into $V_1$ and $V_2$.

We finally prove that \textbf{(iii)} implies \textbf{(i)}. Let $n \ge 3$ and suppose that there exists a bipartition $V_1 \cup V_2$ of $[n]$ such that $(i_1,i_2) \in E(D)$ only if $i_1 \in V_1$, $i_2 \in V_2$. Then for any $\alpha \in \langle D \rangle$, all elements of $V_2$ are fixed by $\alpha$ and $i_1 \alpha \in \{i_1\} \cup N^+(i_1)$ for any $i_1 \in V_1$. In particular, any non-fixed point of $\alpha$ is mapped to a fixed point, so $r:=\rk(\alpha) = \fix(\alpha)$. Let $J := \{v_1, \dots, v_{n-r}\} \subseteq V_1$ be the set of non-fixed points of $\alpha$; therefore
\[ 	\alpha = (v_1 \to v_1 \alpha) \dots (v_{n-r} \to v_{n-r} \alpha), \]
where each one of the $n-r$ arcs above belongs to $\genset{D}$. The result follows by inequality (\ref{eq:n-rk}).
\end{proof}


\section{Arc-generated semigroups with long words} \label{sec:long}

Fix $n \geq 2$. In this section, we consider digraphs $D$ that maximise $\ell(D,r)$ and $\ell(D)$. For $r \in [n-1]$, define
\begin{align*}
	\ell_{\max}(n,r) & := \max \left\{ \ell(D, r) :  V(D) = [n]  \right\}, \\
	\ell_{\max}(n) &:= \max \left\{ \ell(D) : V(D) = [n]  \right\}.
\end{align*}

\begin{problem}
Is $\ell_{\max}(n)$ upper bounded by a polynomial in $n$?
\end{problem}

\begin{table}[!h]
\begin{center}
\begin{tabular}{c|c| c|c|c|c}
  $n \backslash r$ & $1$ & $2$ & $3$ & $4$ & $5$ \\
  \hline 
  $2$  		& $1$ & & & &  \\
  $3$               & $2$ & $6$ &   &   &    \\
  $4$                & $3$ & $11$ & $13$ &   &    \\
  $5$                & $4$ & $18$ & $24$ & $33$ &    \\
  $6$ 		& $5$ & $26$ & $42$ & $51$ & $66$
\end{tabular}
\caption{First values of $\ell_{\max}(n,r)$} \label{table:lmax}
\end{center}
\end{table}

The first few values of $\ell_{\max}(n,r)$, calculated with the GAP package \emph{Semigroups} \cite{M15}, are given in Table \ref{table:lmax}. By Lemma \ref{le:constant}, $\ell_{\max}(n,1) =  n-1$ for all $n \geq 2$; henceforth, we shall always assume that $n \geq 3$ and $r \in [n-1] \setminus \{ 1\}$.

In the following sections, we restrict the class of digraphs that we consider in the definition of $\ell_{\max}(n,r)$ and $\ell_{\max}(n)$ to two important cases: acyclic digraphs and strong tournaments.  


\subsection{Acyclic digraphs}

For any $n \geq 3$, let $\Acy_n$ be the set of all acyclic digraphs on $[n]$, and, for any $r \in [n-1]$, define
\begin{align*}
	\ell_{\max}^{\Acy}(n,r) &:= \max \left\{ \ell(A, r) :  A \in \Acy_n \right\},\\
	\ell_{\max}^{\Acy}(n) &:= \max \left\{ \ell(A) : A \in \Acy_n  \right\}.
\end{align*}
Without loss of generality, we assume that any acyclic digraph $A$ on $[n]$ is topologically sorted, i.e. $(u, v) \in E(A)$ only if $v > u$. 

In this section, we establish the following theorem.

\begin{theorem} \label{th:lAcy}
For any $n\geq 3$ and $r \in [n-1] \setminus \{ 1\}$,
\begin{align*}
\ell_{\max}^{\Acy}(n,r) &= \frac{(n-r)(n+r-3)}{2} + 1, \\
\ell_{\max}^{\Acy}(n) & = \ell_{\max}^{\Acy}(n,2) = \frac{1}{2} (n^2 -3n + 4). 
\end{align*}
\end{theorem}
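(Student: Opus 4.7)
The plan is to prove the first identity; the second then follows because $f(r):=\frac{(n-r)(n+r-3)}{2}+1$ is strictly decreasing on $\{2,\dots,n-1\}$ and $f(2)=\frac{n^2-3n+4}{2}\ge n-1 = \ell_{\max}^{\Acy}(n,1)$ by Lemma~\ref{le:constant}, so the overall maximum over $r\in[n-1]$ is attained at $r=2$.

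For the upper bound, topologically sort $A\in\Acy_n$ so that every arc $(u,v)$ has $u<v$. Then $x\alpha\ge x$ for every $\alpha\in\genset{A}$, and in particular $\alpha(n)=n$. Fix a minimum-length word $\omega$ expressing $\alpha$ and track the trajectory of each element $x$, which is a strictly increasing sequence of positions driven by those arcs of $\omega$ acting on $x$'s current location. Because positions never decrease, any two trajectories that coincide stay together forever, so the preimage classes $P_1,\dots,P_r$ of $\alpha$ (with $P_i\alpha=\{v_i\}$, $v_1<\cdots<v_r=n$, and $v_i\ge\max P_i$) coincide with the equivalence classes of the ``ever-coincide'' relation. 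I would then bound $L=\ell(A,\alpha)$ by combining the displacement inequality $L\le\sum_{x\in[n]}(\alpha(x)-x)$ with the rank-$r$ structural constraints on where the necessary $n-r$ merging events can occur (each within some $P_i$ at a position at most $v_i$), and maximise the resulting bound over all rank-$r$ transformations satisfying $x\alpha\ge x$ to obtain exactly $\frac{(n-r)(n+r-3)}{2}+1$.

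For the lower bound I would exhibit an explicit pair $(A^\ast,\alpha^\ast)$ with $A^\ast\in\Acy_n$ and $\alpha^\ast\in\genset{A^\ast}$ of rank $r$ saturating the bound. Guided by the equality case of the upper bound, $\alpha^\ast$ should have a small preimage class whose image $v_1$ lies deep in $A^\ast$ (forcing a long individual traversal for the associated element) together with one large preimage class funnelling into $n$ whose mergings cannot be cascaded for savings; $A^\ast$ must be rich enough to allow this $\alpha^\ast$ but lean enough to forbid such cascading. An explicit word of the claimed length, certified as minimal by the upper bound, then completes the proof.

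The main obstacle is identifying $(A^\ast,\alpha^\ast)$ exactly and proving minimality. The directed path $\vec{P}_n$ only attains $\ell(\vec{P}_n,\alpha)\le 2(n-2)$ for rank-$2$ transformations via the usual cascaded-merge strategy, while the theorem's bound is quadratic in $n$; consequently, for $n\ge 5$ the extremal digraph must be strictly different from $\vec{P}_n$ and must be chosen so that the merging savings available in $\vec{P}_n$ are simply unavailable. Locating this digraph, constructing the extremal $\alpha^\ast$, and ruling out every shorter word is the technical heart of the argument and is where I expect the bulk of the work to lie.
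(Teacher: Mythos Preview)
Your outline has two genuine gaps, one in each direction.

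For the upper bound, the displacement inequality $\ell(A,\alpha)\le\sum_{x}(\alpha(x)-x)$ is correct but too weak on its own. For $r=2$ the maximum of $\sum_x(\alpha(x)-x)$ over rank-$2$ increasing maps of $[n]$ is $\tfrac{(n-2)(n+1)}{2}$ (take $\alpha(n-1)=n-1$ and $\alpha(x)=n$ otherwise), which exceeds the target $\tfrac{(n-2)(n-1)}{2}+1$ by $n-3$; your ``structural constraints on merging events'' are not specified and it is unclear how they would close this gap. The paper instead bounds $\ell(A,\alpha)$ by $\sum_v\psi_A(v,v\alpha)$, where $\psi_A(u,w)$ is the length of a \emph{longest} $A$-path from $u$ to $w$, and then splits on the number of terminal vertices of $A$. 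If $A$ has at least two terminals (so both $n-1$ and $n$ are sinks), every path to $n$ must avoid $n-1$, giving $\psi_A(v,v\alpha)\le\min\{n-1,v\alpha\}-v$; maximising this sharper sum yields exactly $\tfrac{(n-r)(n+r-3)}{2}$. The single-terminal case requires a further three-case analysis, tracking how often the arc $(n-1\to n)$ can appear, and this is precisely where the ``$+1$'' comes from.

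For the lower bound you correctly diagnose that $\vec P_n$ fails because of cascaded merges, but you have not produced the extremal pair, and this is the crux. The paper's digraph is $Q_n:=\vec P_n$ together with the single extra arc $(n-2,n)$; the extremal $\beta_r$ sends $v\le r-2$ to $n-r+v$ and alternately sends $r-1,\dots,n-1$ to $n-1$ and $n$ according to the parity of $n-v$. The proof that $\ell(Q_n,\beta_r)$ attains the bound rests on a claim that in any shortest word each arc carries \emph{exactly one} vertex: if some arc carried two, one shows the arc must be $(n-1\to n)$, and then an earlier occurrence of $(n-2\to n-1)$ can be replaced by $(n-2\to n)$ and the offending $(n-1\to n)$ deleted, contradicting minimality. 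Thus the extra arc $(n-2,n)$ is exactly the device that kills the cascading savings you worried about, and the length then equals $\sum_v d_{Q_n}(v,v\beta_r)=\tfrac{(n-r)(n+r-3)}{2}+1$. Without this explicit construction and the one-vertex-per-arc argument, your lower bound is not established.
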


First of all, we settle the case $r = n-1$, for which we have a finer result.

\begin{lemma}
Let $n \geq 3$ and $A \in \Acy_n$. Then, $\ell(A,n-1)$ is equal to the length of a longest path in $A$. Therefore, 
\[ 	\ell_{\max}^{\Acy}(n,n-1) = n-1. \]
\end{lemma}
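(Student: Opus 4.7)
The plan is to characterise, for any rank $n-1$ element $\alpha \in \genset{A}$, a directed path in $A$ whose length equals $\ell(A, \alpha)$. Since $A$ is topologically sorted, every arc $(a \to b) \in E(A)$ satisfies $a<b$, so every composition of arcs is non-decreasing. Fix a shortest word $\beta_1 \cdots \beta_m$ expressing $\alpha$, and put $\gamma_j := \beta_1 \cdots \beta_j$. Because rank is non-increasing under composition and $\rk(\gamma_m) = n-1$, we must have $\rk(\gamma_j) = n-1$ for every $j \geq 1$; in particular, no $\beta_j$ can be ``redundant'' (that is, if $\beta_j = (a \to b)$ then $a \in \Ima(\gamma_{j-1})$), since dropping a redundant arc yields a shorter word.

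Let $m_j$ denote the unique element of $[n]\setminus\Ima(\gamma_j)$. Writing $\beta_{j+1} = (a \to b)$ with $a \neq m_j$, a direct calculation gives $\Ima(\gamma_{j+1}) = (\Ima(\gamma_j) \setminus \{a\}) \cup \{b\}$, which has size $n-1$ if and only if $b = m_j$; hence $\beta_{j+1} = (m_{j+1} \to m_j)$ for $j \geq 1$, where $m_{j+1} := a$. Since $(m_{j+1}, m_j) \in E(A)$ and $A$ is topologically sorted, $m_{j+1} < m_j$. It follows that the vertices $m_m, m_{m-1}, \dots, m_1, b_1$, where $\beta_1 = (m_1 \to b_1)$, form a directed path in $A$ of length $m$, so $\ell(A,\alpha) \leq k_A$, where $k_A$ denotes the length of a longest directed path in $A$.

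For the matching lower bound, given a longest directed path $v_0 \to v_1 \to \cdots \to v_{k_A}$ in $A$, consider $\alpha := (v_{k_A - 1} \to v_{k_A})(v_{k_A - 2} \to v_{k_A - 1}) \cdots (v_0 \to v_1)$. A direct computation shows $v_i\alpha = v_{i+1}$ for $0 \leq i \leq k_A - 1$ and every other vertex is fixed, so $\rk(\alpha) = n-1$. Applying the structural analysis above to $\alpha$ shows that any expression of $\alpha$ must consist of exactly the arcs $(v_i \to v_{i+1})$ taken in reverse, so $\ell(A,\alpha) = k_A$. Combining the two bounds yields $\ell(A, n-1) = k_A$, and maximising over $A \in \Acy_n$ — using that a longest directed path in an acyclic digraph on $n$ vertices has at most $n-1$ edges, with equality for the Hamiltonian path $1 \to 2 \to \cdots \to n$ — gives $\ell_{\max}^{\Acy}(n, n-1) = n-1$.

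The main obstacle is the rank-preservation argument in the second paragraph: one must carefully track how the missing image point evolves under each arc application and verify that rank $n-1$ is preserved only when the applied arc ``fills'' the missing point, which in turn forces the arcs of any shortest expression to reassemble a directed path in $A$. Once this bookkeeping is set up, both inequalities drop out with little additional work.
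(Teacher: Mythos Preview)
Your proof is correct and follows essentially the same approach as the paper: both arguments observe that in a word expressing a rank-$(n-1)$ transformation, each successive arc must have as its target the current missing image point, so the arcs chain together into a directed path in $A$. The only minor difference is in the lower bound, where the paper simply notes that the constructed $\alpha$ moves $k_A$ vertices (hence needs at least $k_A$ arcs via the bound $\ell(A,\alpha)\ge n-\fix(\alpha)$), whereas you re-invoke the structural analysis to pin down the shortest expression explicitly; both work, but the fixed-point count is quicker.
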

\begin{proof}
Let $v_1, \dots, v_{l+1}$ be a longest path in $A$. Then $\alpha \in \langle A \rangle$ defined by
\[ 	v \alpha := \begin{cases}
	v_{i+1} &\text{if } v = v_i, \ i \in [l], \\
	v &\text{otherwise},
	\end{cases} \]
has rank $n-1$ and requires at least $l$ arcs, since it moves $l$ vertices.

Conversely, let $\alpha \in A$ be a transformation of rank $n-1$, and consider a word expressing $\alpha$ in $A^*$:
\[ 	\alpha = (u_1 \to v_1) (u_2 \to v_2) \dots (u_s \to v_s). \]
Since $\alpha$ has rank $n-1$, we must have $v_2 = u_1$ and by induction $v_i = u_{i-1}$ for $2 \le i \le s$. As $A$ is acyclic, $u_s, u_{s-1}, \dots, u_1, v_1$ forms a path in $A$, so $s \le l$.
\end{proof}

The following lemma shows that the formula of Theorem \ref{th:lAcy} is an upper bound for $\ell_{\max}^{\Acy}(n,r)$.

\begin{lemma}
For any $n\geq 3$ and $r \in [n-1] \setminus \{ 1\}$,
\[ 	\ell_{\max}^{\Acy}(n,r) \leq \frac{(n-r)(n+r-3)}{2} + 1. \]
\end{lemma}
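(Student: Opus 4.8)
The plan is to fix an arbitrary $A \in \Acy_n$ (topologically sorted, so every arc $(u\to v)$ satisfies $u<v$) and an arbitrary $\alpha \in \langle A\rangle$ with $\rk(\alpha)=r$, choose a word $\omega = a_1 a_2 \cdots a_s$ of minimal length $s = \ell(A,\alpha)$ expressing $\alpha$, and bound $s$ by a potential argument. The only feature of $A$ I use is that each generator strictly raises the label it moves, so the estimate really bounds the length of any product of strictly increasing arcs. Writing $a_i = (u_i \to v_i)$ with $u_i<v_i$ and $\beta_i := a_1\cdots a_i$, the occupied set $\Ima(\beta_i)$ starts at $[n]$ and ends at $\Ima(\alpha)$. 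Since $\omega$ is shortest, no arc is idle, so each $a_i$ is of exactly one type: \emph{rank-dropping}, where $u_i,v_i \in \Ima(\beta_{i-1})$ and two blocks merge (there are exactly $n-r$ of these, as the rank falls from $n$ to $r$ in unit steps), or \emph{rank-preserving}, where $u_i \in \Ima(\beta_{i-1})$ but $v_i \notin \Ima(\beta_{i-1})$, so a single occupied label is relocated upward into a vacant slot.

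First I would run the potential $\Phi_i := \sum_{y \in \Ima(\beta_i)} (n-y)$, the total ``room above'' the occupied labels. It starts at $\Phi_0 = \binom{n}{2}$ and ends at $\Phi_s = \sum_{y \in \Ima(\alpha)}(n-y) \ge \binom{r}{2}$, the minimum $\binom r2$ being attained exactly when $\Ima(\alpha)$ is the top segment $\{n-r+1,\dots,n\}$. Every arc decreases $\Phi$ by at least $1$: a preserving arc by $v_i-u_i\ge 1$ and a dropping arc by $n-u_i\ge 1$. Hence $s \le \Phi_0 - \Phi_s \le \binom n2 - \binom r2$. This already settles the boundary case $r=n-1$ (it equals $n-1$), but in general it overshoots the target $\frac{(n-r)(n+r-3)}2+1 = \binom{n-1}2 - \binom{r-1}2 + 1$ by precisely $n-r-1$, so the remaining work is to recover this deficit.

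To do so I would split the total drop of $\Phi$ into its two contributions. Let $P$ be the set of indices of preserving arcs and $D := \sum_{\text{dropping } i}(n-u_i)$. Using $|P| \le \sum_{\text{preserving } i}(v_i-u_i) = \bigl(\binom n2 - \Phi_s\bigr) - D$, one gets $s = |P| + (n-r) \le \binom n2 - \Phi_s - D + (n-r)$, and comparing with the target reduces the entire lemma to the single inequality
\[ \Phi_s + D \ \ge\ \binom r2 + 2(n-r) - 1. \]
The trivial estimates $\Phi_s \ge \binom r2$ and $D \ge n-r$ fall short by exactly $n-r-1$, so this is the whole content: it asserts that the configurations with the cheapest final image ($\Ima(\alpha)$ at the top, $\Phi_s=\binom r2$) are precisely those forced to pay for expensive (low-source) merges, and conversely.

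I expect this last inequality to be the main obstacle, because it is false for arbitrary words and uses minimality essentially. A merge can be made cheap by firing a high arc such as $(n-1\to n)$, but repeating such a merge requires first refilling the vacated label, and each refill is itself a preserving arc; thus $\Phi_s+D$ is held up by a coupling between cheap merges and their refills that the naive split above ignores. I would establish the inequality either by an exchange argument — showing that in a shortest word a cheap merge can always be rerouted through the token refilling its source, so the drop-sources $u_i$ cannot all cluster near $n$ — or by induction on the defect $n-r$, whose base case $r=n-1$ is the longest-path lemma already proved and whose increment on passing from rank $r+1$ to rank $r$ is exactly $\binom r2 - \binom{r-1}2 = r-1$. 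In either route the ever-fixed sink $n$ (of out-degree $0$ in any topologically sorted acyclic digraph, hence always occupied) is what accounts for the residual additive constant $+1$.
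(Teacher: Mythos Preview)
Your reduction is clean and correct up to the displayed inequality
\[
\Phi_s + D \ \ge\ \binom{r}{2} + 2(n-r) - 1,
\]
but from that point on you have not given a proof. You explicitly flag this as ``the main obstacle'' and then offer two sketches (an exchange argument, or induction on the defect) without carrying either out. This is a genuine gap, not a routine detail: as you yourself observe, the inequality fails for non-minimal words (e.g.\ for $n=5$, $r=3$, $\alpha=(3,4,5,5,5)$ the word $(4\to5)(3\to4)(4\to5)(2\to4)(1\to3)$ gives $\Phi_s+D=5<6$), so any argument must exploit minimality in a substantive way. Your exchange idea (``reroute a cheap merge through the token refilling its source'') would have to show that such a reroute never lengthens the word and always raises $D$, and your induction idea gives no mechanism for passing from a shortest word of rank $r$ to one of rank $r+1$. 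Neither sketch comes close to closing the gap, and the entire strength of the lemma lives in this unproved inequality.

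For comparison, the paper does not use an image-based potential at all. It tracks individual vertices: each arc carries at least one vertex, and the arcs carrying a given $v$ trace a path in $A$ from $v$ to $v\alpha$, so $\ell(A,\alpha)\le\sum_v\psi_A(v,v\alpha)$ where $\psi_A$ is the \emph{longest}-path distance. It then bounds this sum by elementary counting, splitting on whether $A$ has one terminal vertex or more. With two or more sinks the bound is $\frac{(n-r)(n+r-3)}{2}$ outright; with a unique sink $n$, the extra $+1$ is isolated to the single arc $(n-1\to n)$, which by minimality occurs at most once. Thus the paper's use of minimality is local and concrete, whereas in your framework minimality must do global work across all $n-r$ merges simultaneously, and you have not shown how.
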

\begin{proof}
Let $A$ be an acyclic digraph on $[n]$, let $\alpha \in \genset{A}$ be a transformation of rank $r \geq 2$, and let $L \subset V(A)$ be the set of terminal vertices of $A$. For any $u,v \in [n]$, denote the length of a longest path from $u$ to $v$ in $A$ as $\psi_A(u,v)$. 

\begin{claim}\label{lem:l_A}
$\ell(A, \alpha) \le \sum_{v \in [n]} \psi_A(v,  v \alpha)$.
\end{claim}
\begin{proof}
Let $\omega = (a_1 \to b_1) \dots (a_l \to b_l)$ be a shortest word expressing $\alpha$ in $A^*$, with $l = \ell(A, \alpha)$. Say that the arc $(a_i \to b_i)$, $i \geq 2$, \emph{carries} $v \in [n]$ if $v (a_1 \to b_1) \dots (a_{i-1} \to b_{i-1}) = a_i$ (assume that $a_1 \to b_1$ only carries $a_1$). Every arc $(a_i \to b_i)$ carries at least one vertex, for otherwise we could remove that arc form the word $\omega$ and obtain a shorter word still expressing $\alpha$. Let $v \in [n]$, and denote $v_0 = v$ and $v_i = v (a_1 \to b_1) \dots (a_i \to b_i)$ (and hence $v_l = v \alpha$). Let us remove the repetitions in this sequence: let $j_0 = 0$ and for $i \ge 1$, $j_i = \min\{j : v_j \ne v_{j_{i-1}}\}$. Then the sequence $v = v_{j_0}, v_{j_1}, \dots, v_{j_{l(v)}} = v \alpha$ forms a path in $A$ of length $l(v)$, and hence $l(v) \le \psi(v, v\alpha)$. For each $v \in [n]$, there are $l(v)$ arcs in $\omega$ carrying $v$, so the length of $\omega$ satisfies
\[ 	 l \le \sum_{v =1}^n l(v) \le \sum_{v \in [n]} \psi_A(v,  v \alpha). \]
\end{proof}

\begin{claim}\label{lem:psi(A)1}
If $|L| \ge 2$, then $\sum_{v \in [n]} \psi_A(v,  v \alpha) \le \frac{(n-r)(n+r-3)}{2}$.
\end{claim}
\begin{proof}
As $|L| \ge 2$, and $A$ is topologically sorted, we have $\{ n, n-1 \} \subseteq  L$, and any $\alpha \in \genset{A}$ fixes both $n-1$ and $n$, i.e. $\psi_A(v,  v \alpha) = 0$ for $v \in \{n-1, n\}$. For any $v \in [n-2]$, we have 
\[ 	\psi_A(v, v\alpha) \le \min\{ n-1, v\alpha\} - v. \]
Hence
\begin{align*}
	\sum_{v \in [n]} \psi_A(v,  v \alpha) &= \sum_{v \in [n-2]} \psi_A(v,  v \alpha)\\
	&\le \sum_{v \in [n-2]} \left( \min\{ n-1, v\alpha\} - v \right)\\
	&= \sum_{w \in [n-2]\alpha} \left( \min\{ n-1, w\} |w \alpha^{-1}| \right) - T_{n-2},
\end{align*}
where $T_k = \frac{k(k+1)}{2}$. The summation is maximised when $|n \alpha^{-1}| = n-r$ and $|w \alpha^{-1}| = 1$ for $n-r+1 \le w \le n-2$, thus yielding
\begin{align*}
	\sum_{v \in [n]} \psi_A(v,  v \alpha) &\le (n-1)(n-r) + (T_{n-2} - T_{n-r}) - T_{n-2}\\
	&= \frac{(n-r)(n+r-3)}{2}.
\end{align*}
\end{proof}

\begin{claim}\label{lem:psi(A)2}
If $\vert L \vert = 1$, then $\ell(A, \alpha) \le \frac{(n-r)(n+r-3)}{2} + 1$.
\end{claim}
\begin{proof}
As $A$ is topologically sorted, $L = \{ n\}$. We use the notation from the proof of Claim \ref{lem:l_A}. We then have $l(n) = 0$. We have three cases:
\begin{description}
\item[Case 1:] $(n-1)$ is fixed by $\alpha$. Then, $l(n-1) = 0$ and $l(v) \le \min\{n-1, v \alpha\} - v$ for all $v \in [n-2]$. By the same reasoning as in Claim \ref{lem:psi(A)1}, we obtain $\ell(A, \alpha) \le \frac{(n-r)(n+r-3)}{2}$. 

\item[Case 2:] $(n-1) \alpha = n$ and $v \alpha \le n-1$ for every $v \in [n-2]$. Then again $l(v)\le \min\{n-1, v\alpha \} -v$, for all $v \in [n-2]$, and $\ell(A, \alpha) \le \frac{(n-r)(n+r-3)}{2}$. 

\item[Case 3:] $n$ has at least two pre-images under $\alpha$. Let $\omega = (a_1 \to b_1) \dots (a_l \to b_l)$ be a shortest word expressing $\alpha$ in $A^*$, and denote $\alpha_0 = \id$ and $\epsilon_i = (a_i \to b_i)$, $\alpha_i = \epsilon_1 \dots \epsilon_i$ for $i \in [l]$. We partition $n \alpha^{-1}$ into two parts $S$ and $T$: 
\[ 	S = \{v \in n \alpha^{-1} : v_{l(v) - 1} = n-1\}, \quad T = n \alpha^{-1} \setminus S. \] 
For all $v \in S$, if the arc carrying $v$ to $n-1$ is $\epsilon_j$, then $(n-1)\alpha_{j-1}^{-1} \subseteq S$ ($v$ can only collapse with other pre-images of $\alpha$). Then the arc $(n-1 \to n)$ occurs only once in the word $\omega$ (if it occurs multiple times, then remove all but the last occurrence of that arc to obtain a shorter word expressing $\alpha$). If we do not count that arc, we have $l'(v) \le n-1-v$ arcs carrying $v$ if $v \in S$, $l(v) \le n-1-v$ arcs carrying $v$ if $v \in T$, and $l(v) \le v\alpha - v$ if $v\alpha \ne n$. Again, we obtain $\ell(A, \alpha) \le \frac{(n-r)(n+r-3)}{2} + 1$.
\end{description}
\end{proof}
\end{proof}

The following lemma completes the proof of Theorem \ref{th:lAcy}.

\begin{lemma}
For any $n \geq 3$ and $r \in [n-1] \setminus \{ 1\}$, there exists an acyclic digraph $Q_n$ on $[n]$ and a transformation $\beta_r \in \genset{Q_n}$ of rank $r$ such that
\[ \ell(Q_n, \beta_r) \geq  \frac{(n-r)(n+r-3)}{2} + 1. \]
\end{lemma}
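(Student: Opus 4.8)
The plan is to produce the extremal pair $(Q_n,\beta_r)$ explicitly and then argue the lower bound directly, since the preceding lemma already supplies the matching upper bound $\ell(A,\alpha)\le\frac{(n-r)(n+r-3)}{2}+1$; the two together force the equality in Theorem~\ref{th:lAcy}. I would engineer $Q_n$ and $\beta_r$ so that every inequality used in the proof of the upper bound becomes an equality. Concretely, $Q_n$ should have a single terminal vertex $n$ (this is what unlocks the ``$+1$'' coming from Case~3 of Claim~\ref{lem:psi(A)2}), and it should be assembled as a union of directed paths running into the image vertices, chosen pairwise \emph{edge}-disjoint and with no shortcuts, so that the shortest-path length $d_{Q_n}(v,v\beta_r)$ equals the length of the prescribed path for each $v$. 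For $\beta_r$ I would take the transformation saturating the optimisation in Claim~\ref{lem:psi(A)1}: the bottom $n-r$ vertices are routed to $n$ along these long edge-disjoint paths while the top $r-1$ vertices are fixed, the total of all path lengths being $\frac{(n-r)(n+r-3)}{2}$. First I would write these down concretely, exhibit for each $v$ a directed walk in $Q_n$ from $v$ to $v\beta_r$ (so that $\beta_r\in\genset{Q_n}$), and check $\rk(\beta_r)=r$.

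The heart of the matter is the lower bound, i.e.\ that no word in $Q_n^*$ expresses $\beta_r$ more cheaply. The backbone of the argument is the principle that two tokens with \emph{different} images can never be carried by one arc: if an arc $(a\to b)$ carries both $u$ and $w$, then afterwards $u$ and $w$ occupy the same vertex, and since every later arc moves them together they can never again be separated, forcing $u\beta_r=w\beta_r$. Hence, in any word $\omega$ expressing $\beta_r$, each arc occurrence is ``owned'' by a single class of $\ker(\beta_r)$, and $\ell(\omega)=\sum_{w\in\Ima(\beta_r)}(\text{occurrences owning }w)$. For the within-class contribution I would reuse the carry bookkeeping of Claim~\ref{lem:l_A} run as a lower bound: each token $v$ is carried at least $d_{Q_n}(v,v\beta_r)$ times, and the only way to save carries is to batch two same-image tokens by having them meet at a common vertex $x$ and then travel together. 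Because the paths are edge-disjoint, at any vertex $x$ shared by two of them the paths immediately diverge, so they cannot travel together out of $x$; consequently the tokens of a class must be routed essentially sequentially, and no batching saving is available. Summing $d_{Q_n}(v,v\beta_r)$ over all $v$ then recovers $\frac{(n-r)(n+r-3)}{2}$, and adding the single unavoidable use of the terminal arc into $n$ yields the extra $+1$.

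The step I expect to be the main obstacle is making ``no shortcut and no batching saving'' rigorous and exact: I must show that $Q_n$ admits no alternative, shorter routing that lets a token reach its image in fewer than $d_{Q_n}(v,v\beta_r)$ arcs, and that the forced sharing among same-image tokens is \emph{exactly} accounted for, so that the savings amount to no more than the $\binom{r-1}{2}$-type deficit already built into the target formula. I would control this by keeping the path system rigid enough that each $d_{Q_n}(v,v\beta_r)$ is realised by a unique shortest path and that distinct classes' paths are edge-disjoint, then tracking the reduced trajectories $v=v_{j_0},v_{j_1},\dots,v_{j_{l(v)}}=v\beta_r$ from Claim~\ref{lem:l_A} to certify $l(v)=d_{Q_n}(v,v\beta_r)$ in every word. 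The fiddly points I would verify last are the single-terminal bookkeeping that produces the ``$+1$'' and the degenerate boundary regime $n-r\le 2$, where $Q_n$ collapses to a path and the bound must read $r(n-r)$, consistently with the formula.
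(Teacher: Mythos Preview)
Your proposal has a genuine gap in the lower-bound argument, and it stems from the construction you chose for $\beta_r$. You send all of the bottom $n-r$ vertices to the \emph{same} image $n$. Your ``different images can never be carried by one arc'' principle is correct but then does no work at all here: the only nontrivial kernel class is $n\beta_r^{-1}$, and nothing you have said prevents those $n-r$ tokens from batching. Your fix---``the prescribed paths are edge-disjoint, so at a shared vertex they immediately diverge''---does not help, because tokens are not obliged to follow the prescribed paths. As soon as two same-image tokens meet at a vertex $x$, they can (and a shortest word will) travel together along \emph{any} route from $x$ to $n$ available in $Q_n$, regardless of which edge belonged to which prescribed path. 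Edge-disjointness of routes is a statement about the digraph, not a constraint on how a word evaluates. So the inequality $\ell(\omega)\ge\sum_v d_{Q_n}(v,v\beta_r)$ that you are aiming for simply fails for your $\beta_r$.

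The paper's construction avoids this by engineering $\beta_r$ so that \emph{consecutive} vertices along the path always have different images: $Q_n$ is the directed path $1\to 2\to\cdots\to n$ with a single extra arc $(n-2,n)$, and on the tail $r-1,\dots,n-1$ the map $\beta_r$ alternates between $n-1$ and $n$, while the head $1,\dots,r-2$ is shifted injectively onto $n-r+1,\dots,n-2$. The point of the alternation is that if an arc ever carried two vertices $u<v$, then $u+1$ would have a different image and would be trapped between them on the path, which (after a short case analysis using the special arc $(n-2,n)$) contradicts minimality. Once you know every arc carries exactly one vertex, the potential $\delta(i)=\sum_v d_{Q_n}(v\alpha_i,v\beta_r)$ drops by at most one per arc and you read off $\ell\ge\delta(0)=\frac{(n-r)(n+r-3)}{2}+1$. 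In short: the missing idea is to spread the non-fixed points across \emph{many} images in an interleaved pattern, not to funnel them all to $n$; your edge-disjoint-paths picture cannot substitute for that.
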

\begin{proof}
Let $Q_n$ be the acyclic digraph on $[n]$ with edge set
\[ E(Q_n) := \left\{ (u, u+1) : u \in [n-1] \right\} \cup \left\{ (n-2, n) \right\}. \]
For any $r \in [n-1] \setminus \{ 1\}$, define $\beta_r \in \genset{Q_n}$ by
\[	v\beta_r := \begin{cases}
		n-r+v &\text{if } v \in [r-2], \\
		n - 1 &\text{if } v \in [n-1] \setminus [r-2], \  n-v \equiv 0 \mod 2,\\
		n &\text{if }v \in [n-1] \setminus [r-2], \  n-v \equiv 1 \mod 2,\\
		n &\text{if } v = n.
	\end{cases} \]

Let $\beta_r$ be expressed as a word in $Q_n^*$ of minimum length as
\[ 	\beta_r = (a_1 \to b_1) \dots (a_l \to b_l), \]
where $l= \ell(Q_n, \beta_r)$. Denote $\alpha_0 := \id$, $\epsilon_i := (a_i \to b_i)$, and $\alpha_i := \epsilon_1 \dots \epsilon_i$, for $i \in [l]$. Say that $\epsilon_i$ carries $u \in [n]$ if $u \alpha_{i-1} = a_i$ and hence $u \alpha_i \ne u \alpha_{i-1}$.

\begin{claim}
For each $i \in [l]$, the arc $\epsilon_i$ carries exactly one vertex.
\end{claim}
\begin{proof}
First, $(a_1, b_1) \in E(Q_n)$ and $a_1 \beta_r = b_1 \beta_r$ imply that $a_1 = n-1$ and $b_1 = n$. Suppose that there is an arc $\epsilon_j$, $j \in [l]$, that carries two vertices $u < v$; take $j$ to be minimal index with this property. We remark that $v \le n-2$ and $u \alpha_{j-1} = v \alpha_{j-1}$ imply $u \beta_r = v \beta_r$. Then $w := u+1$ satisfies $w \beta_r \ne u \beta_r$, so $w$ is not carried by $\epsilon_j$. If $w \alpha_{j-1} \le n-2$, then $u \alpha_{j-1} < w \alpha_{j-1} < v \alpha_{j-1}$ since $u < w < v$ and the graph induced by $[n-2]$ in $Q_n$ is the directed path $\vec{P}_{n-2}$; this contradicts that $u \alpha_{j-1} = v \alpha_{j-1}$. Hence $w \alpha_{j-1} \ge n-1$ and $v \alpha_{j-1} \ge n-1$. If $v \alpha_{j-1} = n$ or $v \beta_r = n-1$, then $\epsilon_j$ does not carry $v$. Thus, $v\alpha_{j-1} = n-1$ and $v \beta_r = n$. Then,  in order to carry $v$ to $n-1$, we have $\epsilon_s = (n-2 \to n-1)$ for at least one $s \in [l]$, and $\epsilon_j = (n-1 \to n)$. For $s \in [j-1]$, replace all occurrences $\epsilon_s = (n-2 \to n-1)$ with $\epsilon_s^\prime := (n-2 \to n)$ and delete $\epsilon_j$: this yields a word in $Q_n^*$ of length $l^\prime<l$ expressing $\beta_r$, which is a contradiction.
\end{proof}

For all $ i \in [l]$, denote $\delta(i) := \sum_{v \in [n]} d_{Q_n}(v \alpha_i,  v \beta_r)$. We then have $\delta(l) = 0$, and by the claim, $\delta(i) \ge \delta(i-1) - 1$ for all $i \in [l]$. Thus $l \ge \delta(0)$, where
\begin{align*}
	\delta(0) &= \sum_{v \in [n]} d_{Q_n}(v,  v \beta_r)\\
	&= \sum_{v=1}^{r-2} (n-r) + \sum_{v=r-1}^{n-2} (n-1 - v) + 1\\
	&= \frac{(n-r)(n+r-3)}{2} + 1.
\end{align*}
\end{proof}


\subsection{Strong tournaments}

Let $n \geq 3$. Recall that if $T$ is a strong tournament on $[n]$, then $\{ a \to b : (a,b) \in E(T) \}$ is a minimal generating set of $\Sing_n$. Let $\Tour_n$ denote the set of all strong tournaments on $[n]$. For $r \in [n-1]$, define
\begin{align*}
	\ell_{\max}^{\Tour}(n, r) &:= \max \{ \ell(T, r) : T \in \Tour_n  \},\\
	\ell_{\max}^{\Tour}(n) &:= \max \{ \ell(T) : T \in \Tour_n  \}.
\end{align*}

Define analogously $\ell_{\min}^{\Tour}(n,r)$ and $\ell_{\min}^{\Tour}(n)$. The first few values of $\ell_{\min}^{\Tour}(n,r)$ and $\ell_{\max}^{\Tour}(n,r)$, calculated with the GAP package \emph{Semigroups} \cite{M15} using data from \cite{MK15}, are given by Table \ref{table:lmax_tour}. The calculation of these values has been the inspiration for the results and conjectures of this section. 

\begin{table}[!hb]
\begin{center}
\begin{tabular}{c|c|c|c|c|c}
	$n \backslash r$ & $2$ & $3$ & $4$ & $5$ & $6$ \\
	\hline 
	$3$ & $(6, 6)$ & & & & \\
	$4$ & $(8,8)$ & $(11,11)$ & & & \\
	$5$ & $(6, 11)$ & $(8, 14)$ & $(10, 17)$ & & \\
	$6$ & $(8, 13)$ & $(10, 18)$ & $(11, 21)$ & $(13, 24)$ & \\
	$7$ & $(8, 16)$ & $(10, 22)$ & $(11, 26)$ & $(13, 29)$ & $(15, 32)$ 
\end{tabular}
\caption{First values of $\left( \ell_{\min}^{\Tour}(n,r) , \ell_{\max}^{\Tour}(n,r) \right)$.} \label{table:lmax_tour}
\end{center}
\end{table}

\begin{lemma}\label{lem:same_kernel} 
Let $n \geq 3$ and $T \in \Tour_n$.
\begin{enumerate}
	\item For any partition $P$ of $[n]$ into $r$ parts, there exists an idempotent $\alpha \in \Sing_n$ with $\ker(\alpha) = P$ such that $\ell(T, \alpha)= n - r$.

	\item For any $r$-subset $S$ of $[n]$, there exists an idempotent $\alpha \in \Sing_n$ with $\Ima(\alpha) = S$ such that $\ell(T, \alpha)= n - r$.
\end{enumerate}
\end{lemma}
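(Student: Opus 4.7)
The lower bound $\ell(T,\alpha) \geq n - r$ follows from (\ref{eq:n-rk}) once we have a rank-$r$ idempotent $\alpha$, so both parts reduce to exhibiting such an idempotent with the prescribed kernel (respectively, image) that admits a word of length at most $n-r$ in $T^*$.

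For part 1, the plan is to handle each block $B_i$ of $P$ independently using Rédei's theorem: the sub-tournament $T[B_i]$ has a Hamiltonian path $v^i_1, v^i_2, \ldots, v^i_{m_i}$. Take $s_i := v^i_{m_i}$ as the representative of $B_i$, and associate to $B_i$ the word
\[
\omega_i := (v^i_1 \to v^i_2)(v^i_2 \to v^i_3) \cdots (v^i_{m_i-1} \to v^i_{m_i}) \in T^{*},
\]
of length $|B_i| - 1$. A short induction on $j$ shows that after applying the first $j$ arcs the vertices $v^i_1, \ldots, v^i_{j+1}$ all share image $v^i_{j+1}$ and every vertex outside $B_i$ is fixed; hence $\omega_i$ collapses $B_i$ onto $s_i$ while fixing all other positions. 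Because arcs from different blocks act on disjoint positions, the concatenation $\omega_1 \omega_2 \cdots \omega_r$ evaluates to the idempotent $\alpha$ with $\ker(\alpha) = P$ and $\Ima(\alpha) = \{s_1,\ldots,s_r\}$, and its length is $\sum_i (|B_i| - 1) = n - r$.

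For part 2, the idea is to form a single spanning in-forest of $T$ rooted at $S$. Since $T$ is strong, so is its reverse tournament $T^{\mathrm{op}}$; a BFS in $T^{\mathrm{op}}$ launched from the set $S$ therefore yields a spanning out-forest with roots $s_1, \ldots, s_r$, which translates back to a spanning in-forest of $T$ in which every $v \notin S$ has a unique parent $p(v)$ with $(v, p(v)) \in E(T)$. Assign to each non-root vertex $v$ its depth $d(v) \geq 1$ in the forest, and concatenate the $n-r$ arcs $(v \to p(v))$ in order of strictly decreasing $d(v)$, breaking ties arbitrarily. An induction on depth shows that once the arcs of depth $> d$ have all been processed, every strict descendant of a depth-$d$ vertex has reached that vertex's position; applying the depth-$d$ arcs then carries all of them (together with the depth-$d$ vertices themselves) up to their parents. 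Iterating down to depth $1$ yields an idempotent $\alpha$ with $\Ima(\alpha) = S$, realised by a word of length exactly $n-r$.

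The main obstacle in each part is verifying that the word really evaluates to an idempotent of the prescribed shape: one must track how each arc $(a \to b)$ acts on whichever vertices currently share image $a$, and confirm that subsequent arcs neither disturb vertices already placed on their destinations nor prematurely merge distinct kernel classes. Part 1 is clean because distinct blocks decouple (and no arc in any $\omega_i$ starts at $s_i$, so image vertices stay put). In part 2 the depth-decreasing ordering is exactly what guarantees that each arc $(v \to p(v))$ transports the whole subtree rooted at $v$ one step closer to its root, while the fact that no arc starts at any $s_i$ pins the image down to $S$.
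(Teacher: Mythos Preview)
Your proof is correct and follows essentially the same strategy as the paper's. For part~1 both arguments collapse each block of $P$ independently with $|B_i|-1$ arcs; the paper appeals to connectedness of $T[B_i]$ and the constant-map argument of Lemma~\ref{le:constant}, while you invoke R\'edei's theorem to get a Hamiltonian path explicitly, which is just a more concrete realisation of the same collapse. For part~2 both arguments build an acyclic structure directing every vertex of $[n]\setminus S$ toward $S$ and then process arcs in a compatible order; the paper assigns each $v$ to a nearest element $s(v)\in S$, takes the union of shortest paths, and topologically sorts the resulting DAG, whereas your BFS in $T^{\mathrm{op}}$ produces a genuine spanning in-forest and the depth-decreasing order replaces the topological sort. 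Your forest version is slightly cleaner (uniqueness of $p(v)$ makes the induction transparent), while the paper's shortest-path version has the advantage that the resulting $\alpha$ additionally satisfies $d_T(v,v\alpha)=d_T(v,S)$ for every $v$.
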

\begin{proof}
\begin{enumerate}
\item Let $P = \{ P_1, \dots, P_r \}$. For all $1 \le i \le r$, the digraph $T[P_i]$ induced by $P_i$ is a tournament, so it is connected and there exists a vertex $v_i$ reachable by any other vertex in $P_i$: let $\alpha$ map the whole of $P_i$ to $v_i$. Then $\alpha$, when restricted to $P_i$, is a constant map, which can be computed using $|P_i| - 1$ arcs. Summing for $i$ from $1$ to $r$, we obtain that $\ell(T, \alpha) = n - r$.

\item Without loss of generality, let $S = [r] \subseteq [n]$. For every $v \in [n]$, define
\[ 	s(v) := \min \{ s \in S: d_T(s', v) \geq d_T(s,v), \forall s' \in S\}. \]
In particular, if $v \in S$, then $s(v) = v$. Moreover, if $v = v_0, v_1, \dots, v_d = s(v)$ is a shortest path from $v$ to $s(v)$, with $d = d_T(v,s(v))$, then $s(v_i) = s(v)$ for all $0 \le i \le d$. For each $v \in [n ]$, fix a shortest path $P_v$ from $v$ to $s(v)$, and consider the digraph $D$ on $[n]$ with edges 
\[ E(D) := \{ (a,b) : (a,b) \in E(P_v) \text{ for some } v \in [n]\}. \]
Then, $D$ is acyclic and the set of vertices with out-degree zero in $D$ is exactly $S$. Let sort $[n]$ so that $D$ has reverse topological order: $(a,b) \in E(D)$ only if $a >b$. Note that $S$ is fixed by this sorting. Let $\alpha$ be given by $v \alpha:= s(v)$; hence, with the above sorting 
\[  \alpha = \bigcirc_{v=n}^{r+1} (v \to v_1). \]
\end{enumerate}
\end{proof}

\begin{lemma} \label{le:idempotents}
Let $n \geq 3$, $T \in \Tour_n$, and $\alpha:=(u \to v) \in \Sing_n$, for $(u,v) \not \in E(T)$. Then 
\[ \ell(T, \alpha) = 4 d_T(u,v) - 2. \]
\end{lemma}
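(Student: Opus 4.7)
The plan is to prove both bounds $\ell(T,\alpha) \le 4d-2$ and $\ell(T,\alpha) \ge 4d-2$, where $d := d_T(u,v)$; note $d \ge 2$ since $(u,v) \notin E(T)$, and the tournament property then yields $(v,u) \in E(T)$.

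For the upper bound, I fix a shortest directed path $u = u_0, u_1, \ldots, u_d = v$ in $T$. Because the path is shortest in a tournament, every ``backward'' pair $(u_j, u_i)$ with $j \ge i+2$ must lie in $E(T)$ (otherwise $(u_i, u_j) \in E(T)$ would shortcut); in particular $(u_d, u_0) = (v,u) \in E(T)$ and $(u_{j+1}, u_{j-1}) \in E(T)$ for each $j \in [1, d-1]$. I exhibit an explicit word: start with the arc $(u_d \to u_0)$; walk the ``missing vertex'' from $u_d$ back to $u_1$ via $d-1$ forward-path arcs $(u_{j-1} \to u_j)$; alternate $d-1$ swap arcs $(u_{j-1} \to u_j)$ with $d-1$ backward arcs $(u_{j+1} \to u_{j-1})$ for $j=1,\ldots,d-1$; apply the final swap $(u_{d-1} \to u_d)$; and finish with $d-1$ forward-path arcs walking the missing vertex from $u_{d-1}$ back to $u_0$. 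The total length is $1 + (d-1) + 2(d-1) + 1 + (d-1) = 4d - 2$, and a direct computation (best organised through the state formalism below) confirms this word evaluates to $(u \to v)$.

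For the lower bound, let $\omega = \omega_1 \cdots \omega_l$ be a minimum-length word expressing $\alpha$, and write $f_i := \omega_1 \cdots \omega_i$. Since $\rk(\alpha) = n-1$ and rank is non-increasing under composition of idempotent arcs, $\rk(f_i) = n-1$ for every $i \in [l]$. Consequently each $f_i$ has a unique vertex $z_i \notin \Ima(f_i)$ (the ``missing vertex'') and a unique $2$-element set $P_i \subseteq [n]$ collapsed to a single image $y_i := f_i(P_i)$. The crucial observation is the \emph{pair-invariance claim}: $P_i = \{u,v\}$ for all $i$. A case analysis on $\omega_{i+1} = (a \to b)$ shows that the only rank-preserving option (excluding the redundant $a = z_i$, forbidden by minimality) is $a \ne z_i = b$; in that case a direct computation (splitting into sub-cases $a = y_i$ and $a \ne y_i$) verifies $P_{i+1} = P_i$. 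Since $P_1 = \{a_1, b_1\}$ and $P_l = \{u,v\}$, we deduce $\{a_1, b_1\} = \{u, v\}$; combined with $(u,v) \notin E(T)$ and the tournament property, this forces $\omega_1 = (v \to u)$, so that $(y_1, z_1) = (u, v)$ while $(y_l, z_l) = (v, u)$.

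Each subsequent arc then acts on the state $(y_i, z_i)$ as either a \emph{step} (preserving $y$ and moving $z$ along an edge $(z_{i+1}, z_i) \in E(T)$) or a \emph{swap} (exchanging $y$ and $z$, requiring $(y_i, z_i) \in E(T)$). Let $k$ be the number of swaps; the intermediate $y$-values $y^{(0)} = u, y^{(1)}, \ldots, y^{(k)} = v$ form a directed path of length $k$ in $T$, so $k \ge d$. The step-phases contribute at least $d_T(y^{(1)}, v) \ge d-1$ before the first swap, at least $1$ between each pair of consecutive swaps, and at least $d_T(u, y^{(k-1)}) \ge d-1$ after the last swap (the lower bounds follow from the triangle inequality applied to the edges $(u, y^{(1)})$ and $(y^{(k-1)}, v)$ supplied by the first and last swaps). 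Summing,
\[
l \;\ge\; 1 + (d-1) + (k-1) + (d-1) + k \;=\; 2k + 2d - 2 \;\ge\; 4d - 2,
\]
which completes the proof. The main obstacle is the pair-invariance claim, whose verification requires careful case-analysis of how each arc modifies the fibre structure of $f_i$.
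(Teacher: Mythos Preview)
Your proof is correct. The upper-bound construction is literally the same word as the paper's (just described with slightly different bookkeeping; if you write both out for a fixed $d$ they coincide arc-for-arc). The lower bound, however, follows a genuinely different route.

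The paper's lower-bound argument is brief and intuitive: the word must begin with $(v \to u)$; then the collapsed pair $\{u,v\}$ walks from $u$ to $v$ along a walk of some length $l \ge d$; each intermediate vertex on that walk must be temporarily displaced and later returned, and since a tournament has girth $3$ each such round-trip costs at least $3$ arcs, yielding $1 + l + 3(l-1) \ge 4d-2$. Your approach instead introduces the state $(y_i, z_i)$, proves the pair-invariance claim, and then classifies every arc after the first as a \emph{swap} or a \emph{step}, bounding the number of each type via distances in $T$ and the triangle inequality.

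What your approach buys is rigour and transparency: the pair-invariance claim makes precise why $\omega_1 = (v \to u)$ is forced, and the swap/step dichotomy accounts for every arc exactly once without needing to argue separately that the arcs carrying the pair and those carrying the various displaced vertices are disjoint. The paper's argument captures the same underlying mechanism (your swaps are exactly the arcs carrying the pair; your steps are exactly the arcs carrying displaced vertices) but leaves that disjointness implicit and is somewhat informal about the $3(l-1)$ count when the walk is not a simple path. The paper's version is shorter to state; yours is easier to verify line by line. One minor quibble: the sequence $y^{(0)}, \ldots, y^{(k)}$ is a priori only a \emph{walk}, not a path, since vertices may repeat; but as $d_T(u,v)$ lower-bounds the length of any walk from $u$ to $v$, your conclusion $k \ge d$ is unaffected.
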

\begin{proof}
Let $u = v_0, v_1, \dots, v_d = v$ be a shortest path from $u$ to $v$ in $T$, where $d := d_T(u,v)$. As $(u,v) \not \in E(T)$ and $T$ is a tournament, we must have $(v,u) \in E(T)$. By the minimality of the path, for any $j +1 < i$, we have $(v_j, v_i) \not \in E(T)$, so $(v_i, v_j) \in E(T)$. Then, the following expresses $\alpha$ with arcs in $T^*$: 
\begin{align*}
	(v_0 \to v_d)  = & (v_d \to v_0) (v_{d-1}  \to v_d) (v_{d-2} \to v_{d-1}) \cdots (v_1 \to  v_2) (v_0 \to v_1 ) \\
	& \left( (v_2 \to v_0) (v_1  \to v_2) \right) \left( (v_3 \to v_1) (v_2 \to v_3) \right) \cdots \left( (v_d \to v_{d-2}) (v_{d-1} \to v_d) \right) \\
	& (v_{d-2} \to v_{d-1}) \cdots (v_0 \to v_1).
\end{align*}

So $\ell(T, \alpha) \leq 4d-2$. For the lower bound, we note that any word in $T^*$ expressing $(u \to v)$ must begin with $(v \to u)$. Then, $u$ has to follow a walk in $T$ towards $v$; say this walk has length $l \ge d$. All the vertices on the walk must be moved away (as otherwise they would collapse with $u$) and have to come back to their original position (since $\alpha$ fixes them all); as the shortest cycle in a tournament has length $3$, this process adds at least $3(l-1)$ symbols to the word. Altogether, this yields a word of length at least
\[ 	1 + l + 3(l-1) = 4l-2 \ge 4 d - 2. \]
\end{proof}

Let $n = 2m+1 \geq 3$ be odd, and let $\kappa_n$ be the \emph{circulant tournament} on $[n]$ with edges $E(\kappa_n):=\{ (i, (i+j) \mod n): i \in [n], j \in [m] \}$. Figure \ref{fig:kappa5} illustrates $\kappa_5$. In the following theorem, we use $\kappa_n$ to provide upper and lower bounds for $\ell_{\min}^{\Tour}(n,r)$ and $\ell_{\max}^{\Tour}(n,r)$ when $n$ is odd. 

\begin{figure}[!h]
\begin{center}
\begin{tikzpicture}

	\node (1) at (90-72*1:2) {1};
	\node (2) at (90-72*2:2) {2};
	\node (3) at (90-72*3:2) {3};
	\node (4) at (90-72*4:2) {4};
	\node (5) at (90-72*5:2) {5};	
	
    \draw[-latex] (1) -- (2);
    \draw[-latex] (2) -- (3);
    \draw[-latex] (3) -- (4);
    \draw[-latex] (4) -- (5);
    \draw[-latex] (5) -- (1);

	\draw[-latex] (1) -- (3);
    \draw[-latex] (2) -- (4);
    \draw[-latex] (3) -- (5);
    \draw[-latex] (4) -- (1);
    \draw[-latex] (5) -- (2);

\end{tikzpicture}
\end{center}
\caption{The circulant tournament $\kappa_5$.} \label{fig:kappa5}
\end{figure}
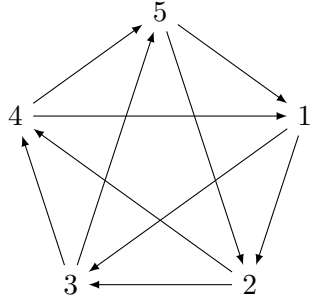

\begin{theorem}
For any $n$ odd, we have
\begin{align*}
	n + r - 2 &\le \ell_{\min}^{\Tour}(n,r) \le n + 8r,\\
	(\hat{r} + 1)(n - \hat{r}) - 1 &\le \ell_{\max}^{\Tour}(n,r) \le 6rn + n -10r.
\end{align*}
where $\hat{r} = \min\{r-1, \lfloor n/2 \rfloor\}$.
\end{theorem}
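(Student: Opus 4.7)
The theorem combines four inequalities, which I would establish by separate arguments, relying on Lemmas \ref{lem:same_kernel} and \ref{le:idempotents} together with the Howie--Iwahori formula (Theorem \ref{Howie-Iwahori}).

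For the two upper bounds I would decompose $\alpha$ of rank $r$ as $\alpha = \epsilon \cdot \gamma$, where $\epsilon$ is an idempotent with $\ker(\epsilon) = \ker(\alpha)$ realised in at most $n - r$ arcs of $T^*$ (by Lemma \ref{lem:same_kernel}), and $\gamma$ is a singular transformation agreeing with $\alpha$ on $\Ima(\epsilon)$. One can arrange $\gamma$ to have support of size at most $2r$, so it admits a Howie--Iwahori factorisation whose length in $K_n^*$ is $O(r)$ (specifically, bounded by $\lfloor 3r/2 \rfloor$ after an appropriate extra collapse). For $\ell_{\min}^{\Tour}(n,r) \le n+8r$, take $T=\kappa_n$: since $\diam(\kappa_n)=2$, Lemma \ref{le:idempotents} shows that each arc of $K_n^*$ expands into at most $6$ arcs of $\kappa_n^*$, giving a total of $(n-r)+6\cdot\lfloor 3r/2\rfloor \le n+8r$. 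For $\ell_{\max}^{\Tour}(n,r) \le 6rn+n-10r$, for an arbitrary strong tournament $T$ each arc of $K_n^*$ expands into at most $4(n-1)-2=4n-6$ arcs of $T^*$; careful accounting using the same decomposition yields the stated bound.

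For the lower bound $\ell_{\min}^{\Tour}(n,r) \ge n+r-2$, given any strong tournament $T$ on $[n]$, I would construct $\alpha \in \langle T\rangle$ of rank $r$ with $\ell(T,\alpha)\ge n+r-2$. Since $T\subseteq K_n$, every $T$-word is a $K_n$-word, so $\ell(T,\alpha)\ge \ell(K_n,\alpha) = n+\cycl(\alpha)-\fix(\alpha)$ by Theorem \ref{Howie-Iwahori}; choosing $\alpha$ with several pure $2$-cycles and no fixed points handles small $r$. For larger $r$ this is insufficient, so I would incorporate a non-edge idempotent $(u\to v)$ of $T$ into $\alpha$: by Lemma \ref{le:idempotents} it contributes an extra $4d_T(u,v)-2\ge 6$ arcs. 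Camion's theorem (Hamiltonicity) and Moon's theorem (vertex-pancyclicity) of strong tournaments supply the structural framework needed to make this construction work uniformly across all $T$.

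For the lower bound $\ell_{\max}^{\Tour}(n,r) \ge (\hat{r}+1)(n-\hat{r})-1$, I would construct a specific tournament $T$ of diameter close to $n-1$ — for example, a near-transitive tournament with a Hamiltonian cycle and chords mostly pointing ``forward'' — together with an $\alpha$ of rank $r$ whose $n-\hat{r}$ non-fixed vertices each require $\hat{r}+1$ arcs to reach their image in $T$. A potential argument tracking $\Phi(\beta) := \sum_{v \in [n]} d_T(v\beta, v\alpha)$ along a word for $\alpha$ shows that each arc decreases $\Phi$ by at most $1$, so $\ell(T,\alpha) \ge \Phi(\id) - \Phi(\alpha) = \Phi(\id)$, yielding the desired inequality. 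The hardest part of the entire theorem is the lower bound $\ell_{\min}^{\Tour}(n,r) \ge n+r-2$, as it must hold uniformly for every strong tournament and cannot exploit the structure of any single one.
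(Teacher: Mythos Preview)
Your upper-bound strategy --- decompose $\alpha=\epsilon\gamma$ with $\ell(T,\epsilon)=n-r$ via Lemma~\ref{lem:same_kernel}, bound $\ell(K_n,\gamma)\le 3r/2$ by Theorem~\ref{Howie-Iwahori}, and expand each $K_n$-arc into at most $4\diam(T)-2$ arcs of $T$ via Lemma~\ref{le:idempotents} --- is exactly the paper's argument, and your arithmetic for both $n+8r$ (using $\diam(\kappa_n)=2$) and $6rn+n-10r$ (using $\diam(T)\le n-1$) is correct.

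Your lower-bound arguments, however, have genuine gaps. The potential claim ``each arc decreases $\Phi$ by at most $1$'' is false: applying $(a\to b)$ changes the term for \emph{every} $v$ with $v\beta=a$, so $\Phi$ may drop by $|a\beta^{-1}|$, not $1$. The paper repairs this with a labelling argument: if an arc carries two vertices they must share the same $\alpha$-image, so one may label each arc by that image. When $u_1,\dots,u_{r-1}$ have pairwise distinct images $v_1,\dots,v_{r-1}$, no arc can carry two of the $u_i$, and hence the number of arcs labelled $v_i$ is at least $d_T(u_i,v_i)$; the remaining $n-r+1$ vertices collapse to a single point $v$, contributing a further $n-r$ arcs. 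This yields $\ell(T,r)\ge (n-r)+\Delta(T,r-1)$ where $\Delta(T,s):=\max_{{\bf u},{\bf v}}\sum_{i=1}^s d_T(u_i,v_i)$ over tuples of distinct entries, and both lower bounds then follow by estimating $\Delta$.

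This unified device also shows that you have misjudged where the difficulty lies. The bound $\ell_{\min}^{\Tour}(n,r)\ge n+r-2$ is \emph{not} the hardest part: by Camion's theorem any strong $T$ has a Hamiltonian cycle $u_1,\dots,u_n$, and since $(u_i,u_{i+1})\in E(T)$ forces $d_T(u_{i+1},u_i)\ge 2$, one gets $\Delta(T,r-1)\ge 2(r-1)$ immediately, hence $\ell(T,r)\ge (n-r)+2(r-1)=n+r-2$. Your Howie--Iwahori route cannot reach this: $\ell(K_n,\alpha)=n+\cycl(\alpha)-\fix(\alpha)\le n+\lfloor r/2\rfloor$ for rank-$r$ transformations, which falls short of $n+r-2$ once $r\ge 5$, and the suggested patch via Lemma~\ref{le:idempotents} gives the length of one specific idempotent but no mechanism for lower-bounding a different $\alpha$. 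For $\ell_{\max}^{\Tour}$, the paper takes a tournament of diameter $n-1$ and uses a diametral path to exhibit $\hat r$ pairs at distance $\ge n-\hat r$ (plus $r-1-\hat r$ further pairs at distance $\ge 1$), giving $\Delta(T,r-1)\ge \hat r(n-\hat r)+(r-1-\hat r)$ and hence the stated bound; your intuition about a near-transitive tournament is right, but the potential argument as written does not establish it.
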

\begin{proof}
Let  $T \in \Tour_n$ and $2 \le r \le n-1$. We introduce the following notation:
\begin{align*}
 	[n]_r & := \{ {\bf u} := (u_1, \dots, u_r) : u_i \neq u_j, \forall i, j \},  \\
	\Delta(T, r) &:= \max \left\{  \sum_{i=1}^r d_T(u_i, v_i) : {\bf u}, {\bf v} \in [n]_r \right\}.
\end{align*}
The result follows by the next claims.
\begin{claim}
$r'(\diam(T) - r' + 1) + r-r' \le \Delta(T,r) \le r \diam(T)$, where $r' = \min\{r, \lfloor ( \diam(T)+1)/2 \rfloor\}$. 
\end{claim}
\begin{proof}
The upper bound is clear. For the lower bound, let $u, v \in [n]$ be such that $d_T(u,v) = \diam(T)$, and let $u = v_0, v_1, \dots, v_d = v$ be a shortest path from $u$ to $v$, where $d = \diam(T)$. Then, $d_T(v_i,v_j) = j-i$, for all $0 \le i \le j \le D$. If $1 \le r \le \lfloor (d+1)/2 \rfloor$, consider ${\bf u}' = (v_0, \dots, v_{r-1})$ and ${\bf v}' = (v_{d-r+1}, \dots, v_d)$, so we obtain $\Delta(T, r) \ge r(d-r+1)$. If $r \ge\lfloor (d+1)/2 \rfloor$, simply add vertices $u'_j$ and $v'_j$ such that $(u'_j, v'_j) \notin T$.
\end{proof}

\begin{claim}
 $\min \{ \Delta(T,r) : T \in \Tour(n) \} = \Delta(\kappa_n,r) = 2r$.
\end{claim}
\begin{proof}
Let ${\bf u} = (u_1, \dots, u_n)$ form a Hamiltonian cycle, and choose ${\bf v} = (u_n, u_1, \dots, u_{n-1})$. Then $d_T(u_i, v_i) \ge 2$ for all $i$. Conversely, since $\diam(\kappa_n) = 2$, we have $\Delta(\kappa_n,r) = 2r$.
\end{proof}

\begin{claim}
$n-r + \Delta(T, r-1) \le \ell(T,r) \le n + 6 r \diam(T) - 4r$.
\end{claim}
\begin{proof}
For the lower bound, consider $\alpha \in \Sing_n$ as follows. Let ${\bf u} = (u_1, \dots, u_{r-1})$ and ${\bf v} = (v_1, \dots, v_{r-1})$ achieve $\Delta(T,r-1)$, and let $v \notin \{v_1, \dots, v_{r-1}\}$; define 
\[ 	x \alpha = \begin{cases}
		v_i &\text{if } x = u_i, \\
		v &\text{otherwise}.
	\end{cases} \]
Let $\omega = e_1 \dots e_l$ (where $e_i =(a_i \to b_i)$) be a shortest word expressing $\alpha$, where $l := \ell(T,\alpha)$. Recall that an arc $e_i$ carries a vertex $c$ if $c e_1 \dots e_{i-1} = a_i$. By the minimality of $\omega$, every arc carries at least one vertex. Moreover, if $c$ and $d$ are carried by $e_i$, then $c \alpha = d \alpha$; therefore, we can label every arc $e_i$ of $\omega$ by an element $c(e_i) \in \Ima(\alpha)$ if $e_i$ carries vertices eventually mapping to $c(e_i)$. Denote the number of arcs labelled $c$ as $l(c)$, we then have $l = \sum_{c \in \Ima(\alpha)} l(c)$. For any $u \in V$, there are at least $d_T(u, u \alpha)$ arcs carrying $u$. Therefore,
\[ 	l = \sum_{c \in \Ima(\alpha)} l(c)
	\ge \sum_{i=1}^{r-1} d_T(u_i, v_i) + \sum_{a \notin {\bf u}} d_T(a, v)
	\ge \Delta(T, r-1) + n-r.\]

For the upper bound, we can express any $\alpha \in \Sing_n$ of rank $r$ in the following fashion. By Lemma \ref{lem:same_kernel}, there exists $\beta \in \Sing_n$ with the same kernel as $\alpha$ such that $\ell(T, \beta) = n-r$. Suppose that $\Ima(\alpha) = \{ v_1, \dots, v_r \}$ and $\Ima(\beta) = \{u_1, \dots, u_r \}$, where $u_i \beta^{-1}  = v_i \alpha^{-1}$, for $i \in [r]$. Let $h \in [n] \setminus \Ima(\beta)$. Define a transformation $\gamma$ of $[n]$ by 
\[ 	x \gamma = \begin{cases}
		v_i &\text{if } x = u_i,\\ 
		 v_1 & \text{if } x= h, \\
		x &\text{otherwise}.
	\end{cases}\]
Then $\alpha = \beta \gamma$, where $\gamma \in \Sing_n$, and by Theorem \ref{Howie-Iwahori}
\[ 	\ell(K_n, \gamma) = n - \fix(\gamma) + \cycl(\gamma) \le r + \frac{r}{2} = \frac{3r}{2}. \]
By Lemma \ref{le:idempotents}, each arc associated to $K_n$ may be expressed in at most $4 \diam(T) - 2$ arcs associated to $T$; therefore, 
\[ 	\ell(T, \gamma) \leq \frac{3r}{2}( 4 \diam(T) - 2) = 6r \diam(T) - 3r. \]
Thus,
\[ \ell(T,\alpha) \leq \ell(T,\beta) + \ell(T,\gamma) \leq n + 6r\diam(T) -4r. \]
\end{proof}
\end{proof}


We finish this section by proposing two conjectures. Let $\pi_n$ be the tournament on $[n]$ with edges $E(\pi_n):=\{ ( i, (i+1) \mod n ) : i \in [n]\} \cup \{ (i, j) : j +1 < i \}$. Figure \ref{fig:pi5} illustrares $\pi_5$. 

\begin{figure}[!ht]
\begin{center}
\begin{tikzpicture}
	\node (1) at (0,0) {1};
	\node (2) at (2,0) {2};
	\node (3) at (4,0) {3};
	\node (4) at (6,0) {4};
	\node (5) at (8,0) {5};
	
	\draw[-latex] (1) -- (2); 
	\draw[-latex] (2) -- (3);
	\draw[-latex] (3) -- (4);
	\draw[-latex] (4) -- (5);
	
	\draw[-latex] (3) .. controls(2,1) .. (1);
	\draw[-latex] (4) .. controls(3,2) .. (1);
	\draw[-latex] (5) .. controls(4,3) .. (1);
	
	\draw[-latex] (4) .. controls(4,1) .. (2);
	\draw[-latex] (5) .. controls(5,2) .. (2);
	
	\draw[-latex] (5) .. controls(6,1) .. (3);
\end{tikzpicture}
\end{center}
\caption{$\pi_5$} \label{fig:pi5}
\end{figure}

\begin{conjecture}\label{conj}
For every $n \geq 3$, $r \in [n-1]$, and $T \in \Tour_n$, we have
\[  \ell(T, r) \leq \ell(\pi_n, r) = \ell_{\max}^{\Tour}(n, r) , 
\]
with equality if and only if $T \cong \pi_n$. Furthermore,
\[ \ell(\pi_n) = \ell_{\max}^{\Tour}(n) = \frac{n^2 + 3n - 6}{2},\]
which is achieved for $\alpha : = n \ (n-1) \ \dots \ 2 \ n$.
\end{conjecture}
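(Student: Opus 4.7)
The plan is to split the conjecture into three tasks: (i) verify the lower bound $\ell(\pi_n, \alpha) \ge \frac{n^2 + 3n - 6}{2}$ for the specified $\alpha := n\,(n-1)\,\dots\,2\,n$; (ii) establish the upper bound $\ell(T, r) \le \ell(\pi_n, r)$ for every strong tournament $T$ and every rank $r$; and (iii) prove the uniqueness of $\pi_n$ as maximiser. The transformation $\alpha$ has image $\{2, \dots, n\}$ and rank $n-1$, sending $k \mapsto n-k+1$ for $k \in [n-1]$ and fixing $n$; note that in $\pi_n$ the only forward arcs are the Hamiltonian cycle $(i, i+1 \bmod n)$ and all remaining arcs go strictly backwards.

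For (i), I would set up the potential $\Phi(\beta) := \sum_{v \in [n]} d_{\pi_n}(v\beta, v\alpha)$, observe that each arc decreases $\Phi$ by at most the number of vertices it carries, and adapt the acyclic argument from the proof with $Q_n$ to show that every arc in a minimal word for $\alpha$ carries exactly one vertex, by tracking forbidden collisions against the injective structure of $\alpha$ on $[n-1]$. Since every vertex $k \in [n-1]$ must reach $n-k+1$, and the only shortcuts available are backward, the carried vertex must first travel a long forward distance on the cycle before any backward jump becomes useful; summing these forced distances together with the extra ``merging'' cost of bringing $1$ and $n$ to a common image should yield the quadratic lower bound. A matching explicit word confirms equality.

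For (ii), I would use a factorisation $\alpha = \beta \gamma$ with $\beta$ an idempotent such that $\ker(\beta) = \ker(\alpha)$, so that $\ell(T, \beta) \le n-r$ by Lemma~\ref{lem:same_kernel}, and $\gamma$ adjusts the image of $\beta$ onto $\Ima(\alpha)$. The naive bound from Theorem~\ref{Howie-Iwahori} combined with Lemma~\ref{le:idempotents}, namely $\ell(T, \gamma) \le \tfrac{3r}{2}(4\diam(T)-2)$, is far too loose; a sharper amortised analysis is needed that charges each arc to a specific carried vertex and exploits overlap between the expressions of the Howie-Iwahori idempotents inside $T$. The target is to show that the resulting bound is non-decreasing in the distances $d_T(u,v)$ appearing in any Howie-Iwahori decomposition, and hence maximised precisely when $T$ realises diameter $n-1$ with no helpful shortcut.

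Part (iii) is, I expect, the main obstacle. Any $T \not\cong \pi_n$ must contain some pair $(u,v)$ with $d_T(u,v) < d_{\pi_n}(u,v)$, and this shortcut should translate into a saving of at least one arc for some witness transformation; the difficulty is producing the witness uniformly. A plausible route is induction on the number of ``forward'' arcs with respect to some Hamiltonian ordering, exploiting that $\pi_n$ is characterised up to isomorphism as the unique strong tournament whose forward-arc set is a single Hamiltonian cycle. Any deviation either creates a shorter-than-$(n-1)$ path, contradicting the diameter extremality established in (ii), or introduces a $3$-cycle avoiding the backbone, which can be used to shorten the expression of $\alpha$ directly via Lemma~\ref{le:idempotents}. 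Making the required isomorphism-invariant distance characterisation of $\pi_n$ precise, and transferring it uniformly across all ranks $r$, is what will most likely resist a complete proof and is presumably why the statement is offered as a conjecture rather than a theorem.
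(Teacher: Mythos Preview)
The paper does not prove this statement: it is labelled \emph{Conjecture} and is supported only by the computational data in Table~\ref{table:lmax_tour} and by the coarse upper and lower bounds of the preceding theorem. There is therefore no proof in the paper to compare your proposal against.

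Your outline correctly identifies the three subproblems and correctly flags (iii) as the hardest, but parts (i) and (ii) are also not close to proofs. For (i), the one-vertex-per-arc argument you want to adapt was established in the paper only for the acyclic digraph $Q_n$, where the absence of cycles forces the positions of carried vertices to progress monotonically and never re-merge; $\pi_n$ is strongly connected, so vertices can circulate and collide repeatedly, and the $Q_n$ argument does not transfer. Even the single lower bound $\ell(\pi_n,\alpha)\ge\frac{n^2+3n-6}{2}$ therefore requires a new idea. For (ii), you observe that the bound $\ell(T,r)\le n+6r\diam(T)-4r$ from the paper is too weak (for $r=n-1$ and $\diam(T)=n-1$ it is cubic in $n$, not quadratic), but you do not supply the promised ``sharper amortised analysis''; without it there is no upper bound of the correct order on $\ell_{\max}^{\Tour}(n,r)$ at all, let alone one that is provably attained at $\pi_n$. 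In short, your proposal accurately maps the terrain but does not go beyond what the paper already contains, which is exactly why the authors state the result as a conjecture.
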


Tournament $\pi_n$ has appeared in the literature before: it is shown in \cite{M66} that $\pi_n$ has the minimum number of strong subtournaments among all strong tournaments on $[n]$. On the other hand, it was shown in \cite{BH65} that, for $n$ odd, the circulant tournament $\kappa_n$ has the maximal number of strong subtournaments among all strong tournaments on $[n]$. 

\begin{conjecture}
For every $n \geq 3$ odd, $r \in [n-1]$, and $T \in \Tour_n$, we have  
\[  \ell_{\min}^{\Tour}(n, r) = \ell(\kappa_n, r).\]
Furthermore, 
\[ 	\ell_{\min}^{\Tour}(n,2) = n+1 \ \text{ and } \ \ell_{\min}^{\Tour}(n,r) = n+r, \]
for all $3 \le r \le \frac{n+1}{2}$.
\end{conjecture}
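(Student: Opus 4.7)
The conjecture has three assertions: (a) the circulant tournament $\kappa_n$ minimises $\ell(T, r)$ over all $T \in \Tour_n$; (b) $\ell(\kappa_n, 2) = n + 1$; and (c) $\ell(\kappa_n, r) = n + r$ for $3 \le r \le (n+1)/2$. The plan is to establish the explicit values (b) and (c) first by computing $\ell(\kappa_n, r)$ directly, and then to use these values as a target for the global minimisation in (a).

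For the upper bound $\ell(\kappa_n, r) \le n + r$ (respectively $n + 1$ when $r = 2$), let $\alpha \in \langle \kappa_n \rangle$ have rank $r$. Lemma \ref{lem:same_kernel}(2) supplies an idempotent $\beta$ with $\Ima(\beta) = \Ima(\alpha)$ and $\ell(\kappa_n, \beta) \le n - r$; then $\alpha = \beta \gamma$, where $\gamma$ acts as a permutation of $\Ima(\alpha)$ and fixes everything else. Exploiting the vertex-transitivity of $\kappa_n$ together with $\diam(\kappa_n) = 2$, for $r \le (n+1)/2$ the image can be arranged along a short arc of the Hamilton cycle of $\kappa_n$, and a case analysis on the cycle structure of $\gamma$ (identity, transposition, $k$-cycle, or composite) should show that $\gamma$ contributes only $r$ additional arcs in general, and only $1$ when $r = 2$, since a single transposition on two adjacent points is particularly cheap. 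For the matching lower bound, Claim \ref{lem:l_A} combined with $\Delta(\kappa_n, r-1) = 2(r-1)$ already delivers $\ell(\kappa_n, r) \ge n + r - 2$; the remaining gap of $2$ is closed by choosing a witness $\alpha$ whose image placement blocks the cheap routes and forces two additional carried arcs (one extra when $r = 2$).

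For (a), the same lower bound $\ell(T, r) \ge n - r + \Delta(T, r-1)$ together with the minimality $\Delta(\kappa_n, r-1) \le \Delta(T, r-1)$ proven in the preceding theorem gives $\ell(T, r) \ge n + r - 2$ uniformly in $T$. The heart of the proof, and what I expect to be the principal obstacle, is closing this two-arc gap across the whole class $\Tour_n$: one must show that every $T \in \Tour_n$ admits a rank-$r$ witness whose evaluation in $T^*$ incurs a surcharge of at least two arcs beyond the distance lower bound. Two natural sources of surcharge are a directed $3$-cycle of $T$ (producing a Howie-Iwahori-like $+\cycl(\alpha)$ contribution, as in Theorem \ref{Howie-Iwahori}) and an edge $(u, v) \in E(T)$ with $d_T(v, u) \ge 2$ (a reversal penalty, cf. Lemma \ref{le:idempotents}). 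The task is then to show that every strong tournament on $n$ odd vertices contains a usable configuration of one of these types, and to verify the implicit uniqueness of the minimum at $\kappa_n$ by separating $\kappa_n$ from the other diameter-$2$ strong tournaments via a forbidden-subdigraph argument in the spirit of Lemma \ref{le:bound2}; this last step is where I expect a genuinely new structural idea to be required.
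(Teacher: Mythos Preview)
The statement you are attempting to prove is presented in the paper as an open \emph{conjecture}; the paper offers no proof, only the bounds in the preceding theorem and the computational evidence of Table~\ref{table:lmax_tour}. There is therefore nothing to compare your proposal against, and indeed your own final paragraph concedes that the decisive step---closing the two-arc gap uniformly across $\Tour_n$---would require ``a genuinely new structural idea.'' What you have written is a plausible outline of where one might look, not a proof.

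Two technical points in the outline are also off. First, your decomposition $\alpha = \beta\gamma$ with $\beta$ an idempotent satisfying $\Ima(\beta)=\Ima(\alpha)$ and $\gamma$ a permutation of $\Ima(\alpha)$ fixing everything else is not available for an arbitrary $\alpha$ of rank $r$: such a factorisation forces $\ker(\beta)=\ker(\alpha)$, hence $\Ima(\alpha)$ must be a transversal of $\ker(\alpha)$, which fails whenever $\alpha$ is not in a group $\H$-class. The paper's own upper-bound argument in the theorem just before the conjectures uses Lemma~\ref{lem:same_kernel}(1) (same \emph{kernel}, not same image) precisely to avoid this obstruction; you would need to do likewise, and then the ``$\gamma$ contributes only $r$ arcs'' heuristic no longer follows directly. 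Second, your citation of Claim~\ref{lem:l_A} for the lower bound is misplaced: that claim gives an \emph{upper} bound for acyclic digraphs via longest paths, not a lower bound for tournaments. The inequality you actually want is $\ell(T,r)\ge n-r+\Delta(T,r-1)$, established inside the proof of the theorem immediately preceding the conjectures.
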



\end{document}